\newtheorem{theorem}{Theorem}
\theoremstyle{plain}
\newtheorem{corollary}{Corollary}
\newtheorem{example}{Example}
\newtheorem{lemma}{Lemma}
\newtheorem{proposition}{Proposition}
\newtheorem{remark}{Remark}
\numberwithin{equation}{section}
\begin{document}
\title[Inequalities via $s-$convexity and $\log -$convexity]{Inequalities
via $s-$convexity and $\log -$convexity}
\author{Ahmet Ocak Akdemir$^{\bigstar }$}
\address{$^{\bigstar }$A\u{g}r\i\ \.{I}brahim \c{C}e\c{c}en University,
Faculty of Science and Arts, Department of Mathematics, A\u{g}r\i , TURKEY}
\email{ahmetakdemir@agri.edu.tr}
\author{Merve Avc\i\ Ard\i \c{c}$^{\diamondsuit ,\spadesuit }$}
\address{$^{\diamondsuit }$Ad\i yaman University, Faculty of Science and
Arts, Department of Mathematics, Ad\i yaman, TURKEY}
\email{merveavci@ymail.com}
\thanks{$^{\spadesuit }$Corresponding Author}
\author{M. Emin \"{O}zdemir$^{\blacktriangledown }$}
\address{$^{\blacktriangledown }$Atat\"{u}rk University, K. K. Education
Faculty, Department of Mathematics, Erzurum}
\subjclass{26D10, 26A15, 26A16, 26A51.}
\keywords{Convex function, $s-$convex function, $\log -$convex function,
Ostrowski inequality, H\"{o}lder inequality, power-mean inequality.\\
This study was supported by A\u{g}r\i\ \.{I}brahim \c{C}e\c{c}en University
BAP with project number FEF.14.011.}

\begin{abstract}
In this paper, we obtain some new inequalities for functions whose second
derivatives' absolute value is $s-$convex and $\log -$convex. Also, we give
some applications for numerical integration. 
\end{abstract}

\maketitle

\section{INTRODUCTION}

We start with the well-known definition of convex functions: a function $%
f:I\rightarrow 
\mathbb{R}
,$ $\emptyset \neq I\subset 
\mathbb{R}
,$ is said to be convex on $I$ if inequality%
\begin{equation*}
f(tx+(1-t)y)\leq tf(x)+(1-t)f(y)
\end{equation*}%
holds for all $x,y\in I$ and $t\in \left[ 0,1\right] .$

In the paper \cite{hudzik}, authors gave the class of functions which are $%
s- $convex in the second sense by the following way. A function $f:[0,\infty
)\rightarrow 
\mathbb{R}
$ is said to be $s-$convex in the second sence if%
\begin{equation*}
f(tx+(1-t)y)\leq t^{s}f(x)+(1-t)^{s}f(y)
\end{equation*}%
holds for all $x,y\in \lbrack 0,\infty ),t\in \left[ 0,1\right] $ and for
some fixed $s\in (0,1].$ The class of $s-$convex functions in the second
sense is usually denoted with $K_{s}^{2}.$

Besides in \cite{hudzik}, Hudzik and Maligranda proved that if $s\in \left(
0,1\right) $ $f\in K_{s}^{2}$ implies $f([0,\infty ))\subseteq \lbrack
0,\infty ),$ i.e., they proved that all functions from $K_{s}^{2},$ $s\in
\left( 0,1\right) ,$ are nonnegative.

\begin{example}
(\cite{hudzik}) Let $s\in \left( 0,1\right) $ and $a,b,c\in 
\mathbb{R}
.$ We define function $f:[0,\infty )\rightarrow 
\mathbb{R}
$ as 
\begin{equation*}
f(t)=\left\{ 
\begin{array}{cc}
a, & t=0, \\ 
bt^{s}+c, & t>0.%
\end{array}%
\right.
\end{equation*}%
It can be easily checked that

(i) If $b\geq 0$ and $0\leq c\leq a,$ then $f\in K_{s}^{2},$

(ii) If $b>0$ and $c<0,$ then $f\notin K_{s}^{2}.$
\end{example}

Several researchers studied on $s-$convex functions, some of them can be
found in \cite{hudzik}-\cite{zeki}.

Another kind of convexity is $\log -$convexity that is mentioned in \cite{6}
by Niculescu as following.

A positive function $f$ is called $\log -$convex on a real interval $I=\left[
a,b\right] $, if for all $x,y\in \left[ a,b\right] $ and $\lambda \in \left[
0,1\right] $,%
\begin{equation*}
f\left( \lambda x+\left( 1-\lambda \right) y\right) \leq f^{\lambda }\left(
x\right) f^{1-\lambda }\left( y\right) .
\end{equation*}

For recent results for $\log -$convex functions, we refer to readers \cite{1}%
-\cite{18}.

Now, we give a motivated inequality for convex functions:

Let $f:I\subset 
\mathbb{R}
\rightarrow 
\mathbb{R}
$ be a convex function on the interval $I$ of real numbers and $a,b\in I$
with $a<b$. The inequality%
\begin{equation*}
\frac{1}{b-a}\int_{a}^{b}f\left( x\right) dx\leq \frac{1}{2}\left[ f\left( 
\frac{a+b}{2}\right) +\frac{f\left( a\right) +f\left( b\right) }{2}\right]
\end{equation*}%
is known as Bullen's inequality for convex functions \cite{17}, p. 39.

We also consider the following useful inequality:

Let $f:I\subset \left[ 0,\infty \right] \rightarrow 
\mathbb{R}
$ be a differentiable mapping on $I^{\circ }$, the interior of the interval $%
I$, such that $f^{\prime }\in L\left[ a,b\right] $ where $a,b\in I$ with $%
a<b $. If $\left\vert f^{\prime }\left( x\right) \right\vert \leq M$, then
the following inequality holds (see \cite{100}).

\begin{equation}
\left\vert f(x)-\frac{1}{b-a}\int_{a}^{b}f(u)du\right\vert \leq \frac{M}{b-a}%
\left[ \frac{\left( x-a\right) ^{2}+\left( b-x\right) ^{2}}{2}\right]
\label{h.1.1}
\end{equation}

This inequality is well known in the literature as the Ostrowski inequality%
\textit{.}\textbf{\ }

The main aim of this paper is to prove some new integral inequalities for $%
s- $convex and $\log -$convex functions by using the integral identity that
is obtained by Sar\i kaya and Set in \cite{0}. We also give some
applications to our results in numerical integration. Some of our results
are similar to the Ostrowski inequality and for special selections of the
parameters, we proved some new inequalities of Bullen's type.

\section{inequalities for $s-$convex functions}

We need the following Lemma which is obtained by Sar\i kaya and Set in \cite%
{0}, so as to prove our results:

\begin{lemma}
\label{lem 2.1} Let $f:[a,b]\rightarrow 
\mathbb{R}
$ be an absolutely continuous mapping. Denote by $K(x,.):[a,b]\rightarrow 
\mathbb{R}
$ the kernel given by%
\begin{equation*}
K(x,t)=\left\{ 
\begin{array}{ccc}
\frac{\alpha }{\alpha +\beta }\frac{\left( t-a\right) \left( x-t\right) }{x-a%
}, &  & t\in \lbrack a,x] \\ 
&  &  \\ 
-\frac{\beta }{\alpha +\beta }\frac{\left( b-t\right) \left( x-t\right) }{b-x%
}, &  & t\in \lbrack x,b]%
\end{array}%
\right.
\end{equation*}%
where $\alpha ,\beta \in 
\mathbb{R}
$ nonnegative and not both zero, then the identity%
\begin{eqnarray*}
&&\int_{a}^{b}K(x,t)f^{\prime \prime }(t)dt \\
&=&f(x)+\frac{\alpha f(a)+\beta f(b)}{\alpha +\beta }-\frac{2}{\alpha +\beta 
}\left[ \frac{\alpha }{x-a}\int_{a}^{x}f(t)dt+\frac{\beta }{b-x}%
\int_{x}^{b}f(t)dt\right]
\end{eqnarray*}%
holds.
\end{lemma}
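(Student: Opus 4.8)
The plan is to prove the identity by direct computation, integrating the right-hand side piecewise and showing it collapses to the claimed expression. Since $K(x,t)$ is defined by two different formulas on $[a,x]$ and $[x,b]$, the natural first step is to split the integral:
\[
\int_a^b K(x,t) f''(t)\, dt = \frac{\alpha}{\alpha+\beta}\frac{1}{x-a}\int_a^x (t-a)(x-t) f''(t)\, dt - \frac{\beta}{\alpha+\beta}\frac{1}{b-x}\int_x^b (b-t)(x-t) f''(t)\, dt.
\]
I would then handle each piece by integration by parts, twice. The key observation is that each quadratic weight, say $p(t)=(t-a)(x-t)$ on the left, vanishes at both endpoints of its subinterval ($p(a)=p(x)=0$ and similarly $(b-t)(x-t)$ vanishes at $t=x$ and $t=b$), so the first integration by parts kills the boundary term involving $f'$ and leaves $-\int p'(t) f'(t)\,dt$.

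**Next I would** integrate by parts a second time. Now the boundary terms survive and produce the function values $f(x)$, $f(a)$, $f(b)$, while the remaining integral $\int p''(t) f(t)\,dt$ has constant integrand $p''(t)$ (since $p$ is quadratic, $p''\equiv -2$), turning it into $-2\int f(t)\,dt$ up to sign. Concretely, on $[a,x]$ with $p(t)=(t-a)(x-t)$ one has $p'(t)=x+a-2t$, so $p'(a)=x-a$, $p'(x)=a-x$, and $p''=-2$; assembling these gives a contribution proportional to $\frac{\alpha}{\alpha+\beta}\frac{1}{x-a}\bigl[(x-a)f(x)+(x-a)f(a)-2\int_a^x f(t)\,dt\bigr]$, and after dividing by $x-a$ the first two terms simplify cleanly. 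The right subinterval is handled symmetrically, with the extra minus sign in front of the $\beta$-term accounting for the sign conventions.

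**The main obstacle** is purely bookkeeping: keeping track of the signs and the endpoint evaluations of $p'$ through the second integration by parts, and making sure the $\frac{1}{x-a}$ and $\frac{1}{b-x}$ factors cancel correctly against the linear terms $p'(a)(x-a)$ etc. There is no analytic difficulty — the hypothesis that $f$ is absolutely continuous with $f''\in L[a,b]$ justifies integration by parts, and no convexity is needed for this lemma. The only care required is to verify that combining the two simplified pieces yields exactly $f(x)+\frac{\alpha f(a)+\beta f(b)}{\alpha+\beta}-\frac{2}{\alpha+\beta}\bigl[\frac{\alpha}{x-a}\int_a^x f+\frac{\beta}{b-x}\int_x^b f\bigr]$, so I would end by explicitly collecting the $f(x)$, $f(a)$, $f(b)$, and integral terms to match the stated identity.
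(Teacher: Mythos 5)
Your computation is correct: the quadratic kernels vanish at the endpoints of their subintervals, so two integrations by parts yield $\frac{\alpha}{\alpha+\beta}\left[f(x)+f(a)\right]-\frac{2\alpha}{(\alpha+\beta)(x-a)}\int_a^x f$ from the left piece and the symmetric expression from the right piece, which sum to the stated identity. The paper itself gives no proof of this lemma (it is quoted from Sarikaya and Set), but your double integration by parts is exactly the standard argument used there, so there is nothing further to compare.
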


\begin{theorem}
\label{teo 2.1} Let $f:[a,b]\rightarrow 
\mathbb{R}
$ be an absolutely continuous mapping such that $f^{\prime \prime }\in
L[a,b].$ If $\left\vert f^{\prime \prime }\right\vert $ is $s-$ convex in
the second sense on $[a,b]$ for some fixed $s\in (0,1],$ then%
\begin{eqnarray*}
&&\left\vert f(x)+\frac{\alpha f(a)+\beta f(b)}{\alpha +\beta }-\frac{2}{%
\alpha +\beta }\left[ \frac{\alpha }{x-a}\int_{a}^{x}f(t)dt+\frac{\beta }{b-x%
}\int_{x}^{b}f(t)dt\right] \right\vert \\
&\leq &\frac{\alpha }{\alpha +\beta }\frac{\left( x-a\right) ^{2}}{\left(
s+2\right) \left( s+3\right) }\left[ \left\vert f^{\prime \prime
}(x)\right\vert +\left\vert f^{\prime \prime }(a)\right\vert \right] \\
&&+\frac{\beta }{\alpha +\beta }\frac{\left( b-x\right) ^{2}}{\left(
s+2\right) \left( s+3\right) }\left[ \left\vert f^{\prime \prime
}(x)\right\vert +\left\vert f^{\prime \prime }(b)\right\vert \right]
\end{eqnarray*}%
holds where $\alpha ,\beta \in 
\mathbb{R}
$ nonnegative and not both zero.
\end{theorem}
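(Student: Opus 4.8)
The plan is to start from the integral identity in Lemma \ref{lem 2.1}, which expresses exactly the quantity inside the absolute value on the left-hand side as $\int_a^b K(x,t)f''(t)\,dt$. Applying the triangle inequality for integrals gives
\begin{equation*}
\left\vert \int_a^b K(x,t)f''(t)\,dt\right\vert \leq \int_a^x \left\vert \frac{\alpha}{\alpha+\beta}\frac{(t-a)(x-t)}{x-a}\right\vert \left\vert f''(t)\right\vert dt + \int_x^b \left\vert \frac{\beta}{\alpha+\beta}\frac{(b-t)(x-t)}{b-x}\right\vert \left\vert f''(t)\right\vert dt.
\end{equation*}
On $[a,x]$ the factors $(t-a)$ and $(x-t)$ are both nonnegative, and on $[x,b]$ the factors $(b-t)$ and $(t-x)$ are both nonnegative, so the absolute values on the kernel can be dropped cleanly, leaving two explicit positive weight functions.

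Next I would bound $\left\vert f''(t)\right\vert$ on each subinterval using the $s$-convexity hypothesis. The natural move is to write each $t$ in the first integral as a convex combination of the endpoints $a$ and $x$: setting $t = \lambda x + (1-\lambda)a$ with $\lambda = \frac{t-a}{x-a}\in[0,1]$, the $s$-convexity of $\left\vert f''\right\vert$ gives $\left\vert f''(t)\right\vert \leq \lambda^s\left\vert f''(x)\right\vert + (1-\lambda)^s\left\vert f''(a)\right\vert$. Similarly on $[x,b]$ I would parametrize $t = \mu b + (1-\mu)x$ and bound $\left\vert f''(t)\right\vert$ by $\mu^s\left\vert f''(b)\right\vert + (1-\mu)^s\left\vert f''(x)\right\vert$. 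I expect the cleanest route is to first change variables in each integral to the parameter on $[0,1]$ (so $t\mapsto \lambda$ on $[a,x]$ and $t\mapsto \mu$ on $[x,b]$), which converts the weight $(t-a)(x-t)/(x-a)$ into $(x-a)\lambda(1-\lambda)$ times the Jacobian $(x-a)\,d\lambda$, i.e. an overall factor $(x-a)^2$ multiplying $\lambda(1-\lambda)$.

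After substitution the first term reduces to $\frac{\alpha}{\alpha+\beta}(x-a)^2$ times $\int_0^1 \lambda(1-\lambda)\bigl[\lambda^s\left\vert f''(x)\right\vert + (1-\lambda)^s\left\vert f''(a)\right\vert\bigr]d\lambda$, and the analogous expression holds for the second term with $(b-x)^2$ and the endpoints $x,b$. The remaining work is the evaluation of the Beta-type integrals
\begin{equation*}
\int_0^1 \lambda^{s+1}(1-\lambda)\,d\lambda = \int_0^1 \lambda(1-\lambda)^{s+1}\,d\lambda = \frac{1}{(s+2)(s+3)},
\end{equation*}
which follows from $\int_0^1 \lambda^{p}(1-\lambda)^{q}\,d\lambda = \frac{p!\,q!}{(p+q+1)!}$ (equivalently $B(p+1,q+1)$) with $(p,q)=(s+1,1)$ and $(p,q)=(1,s+1)$. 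Substituting these common values $\frac{1}{(s+2)(s+3)}$ and collecting the coefficients of $\left\vert f''(x)\right\vert$, $\left\vert f''(a)\right\vert$, $\left\vert f''(b)\right\vert$ yields exactly the claimed bound.

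The main obstacle is not conceptual but bookkeeping: one must correctly handle the change of variables and the Jacobian so that both endpoint-weight integrals collapse to the same constant $\frac{1}{(s+2)(s+3)}$, and one must verify the sign of each kernel factor on its subinterval so that the absolute value of $K(x,t)$ can be replaced by $K(x,t)$ up to the fixed sign absorbed into the nonnegative constants $\alpha,\beta$. Care is also needed because the $s$-convexity bound requires $[a,b]\subset[0,\infty)$ (the domain on which $K_s^2$ is defined), so I would note that the hypotheses implicitly place the interval in the nonnegative reals.
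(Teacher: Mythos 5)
Your proposal is correct and follows essentially the same route as the paper's proof: apply the triangle inequality to the kernel identity of Lemma \ref{lem 2.1}, invoke $s$-convexity of $\left\vert f''\right\vert$ with $t$ written as a convex combination of the endpoints of each subinterval, and evaluate the resulting Beta-type integrals, each equal to $\frac{1}{(s+2)(s+3)}$ after normalization. The only cosmetic difference is that you change variables to $[0,1]$ while the paper integrates directly in $t$; your closing remark that the $s$-convexity hypothesis implicitly requires $[a,b]\subset[0,\infty)$ is a fair observation the paper leaves unstated.
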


\begin{proof}
From Lemma \ref{lem 2.1}, using the property of the modulus and $s-$
convexity of $\left\vert f^{\prime \prime }\right\vert ,$ we can write%
\begin{eqnarray*}
&&\left\vert f(x)+\frac{\alpha f(a)+\beta f(b)}{\alpha +\beta }-\frac{2}{%
\alpha +\beta }\left[ \frac{\alpha }{x-a}\int_{a}^{x}f(t)dt+\frac{\beta }{b-x%
}\int_{x}^{b}f(t)dt\right] \right\vert \\
&\leq &\int_{a}^{b}\left\vert K(x,t)\right\vert \left\vert f^{\prime \prime
}(t)\right\vert dt \\
&\leq &\int_{a}^{x}\frac{\alpha }{\alpha +\beta }\frac{1}{x-a}\left\vert
t-a\right\vert \left\vert x-t\right\vert \left\vert f^{\prime \prime
}(t)\right\vert dt \\
&&+\int_{x}^{b}\frac{\beta }{\alpha +\beta }\frac{1}{b-x}\left\vert
b-t\right\vert \left\vert x-t\right\vert \left\vert f^{\prime \prime
}(t)\right\vert dt \\
&=&\frac{\alpha }{\left( \alpha +\beta \right) \left( x-a\right) }%
\int_{a}^{x}\left( t-a\right) \left( x-t\right) \left\vert f^{\prime \prime
}\left( \frac{t-a}{x-a}x+\frac{x-t}{x-a}a\right) \right\vert dt \\
&&+\frac{\beta }{\left( \alpha +\beta \right) \left( b-x\right) }%
\int_{x}^{b}\left( b-t\right) \left( t-x\right) \left\vert f^{\prime \prime
}\left( \frac{t-x}{b-x}b+\frac{b-t}{b-x}x\right) \right\vert dt \\
&\leq &\frac{\alpha }{\left( \alpha +\beta \right) \left( x-a\right) }%
\int_{a}^{x}\left( t-a\right) \left( x-t\right) \left[ \left( \frac{t-a}{x-a}%
\right) ^{s}\left\vert f^{\prime \prime }(x)\right\vert +\left( \frac{x-t}{%
x-a}\right) ^{s}\left\vert f^{\prime \prime }(a)\right\vert \right] dt \\
&&+\frac{\beta }{\left( \alpha +\beta \right) \left( b-x\right) }%
\int_{x}^{b}\left( b-t\right) \left( t-x\right) \left[ \left( \frac{t-x}{b-x}%
\right) ^{s}\left\vert f^{\prime \prime }(b)\right\vert +\left( \frac{b-t}{%
b-x}\right) ^{s}\left\vert f^{\prime \prime }(x)\right\vert \right] dt \\
&=&\frac{\alpha }{\alpha +\beta }\frac{\left( x-a\right) ^{2}}{\left(
s+2\right) \left( s+3\right) }\left[ \left\vert f^{\prime \prime
}(x)\right\vert +\left\vert f^{\prime \prime }(a)\right\vert \right] +\frac{%
\beta }{\alpha +\beta }\frac{\left( b-x\right) ^{2}}{\left( s+2\right)
\left( s+3\right) }\left[ \left\vert f^{\prime \prime }(x)\right\vert
+\left\vert f^{\prime \prime }(b)\right\vert \right]
\end{eqnarray*}%
where we use the fact that%
\begin{equation*}
\int_{a}^{x}\left( t-a\right) ^{s+1}\left( x-t\right) dt=\int_{a}^{x}\left(
t-a\right) \left( x-t\right) ^{s+1}dt=\frac{\left( x-a\right) ^{s+3}}{\left(
s+2\right) \left( s+3\right) }
\end{equation*}%
and%
\begin{equation*}
\int_{x}^{b}\left( b-t\right) \left( t-x\right) ^{s+1}dt=\int_{x}^{b}\left(
b-t\right) ^{s+1}\left( t-x\right) dt=\frac{\left( b-x\right) ^{s+3}}{\left(
s+2\right) \left( s+3\right) }.
\end{equation*}%
The proof is completed.
\end{proof}

\begin{corollary}
\label{co 2.1} Suppose that all the assumptions of Theorem \ref{teo 2.1} are
satisfied with $\left\vert f^{\prime \prime }\right\vert \leq M.$ Then we
have 
\begin{eqnarray*}
&&\left\vert f(x)+\frac{\alpha f(a)+\beta f(b)}{\alpha +\beta }-\frac{2}{%
\alpha +\beta }\left[ \frac{\alpha }{x-a}\int_{a}^{x}f(t)dt+\frac{\beta }{b-x%
}\int_{x}^{b}f(t)dt\right] \right\vert \\
&\leq &\frac{2M}{\left( s+2\right) \left( s+3\right) }\left[ \frac{\alpha
\left( x-a\right) ^{2}+\beta \left( b-x\right) ^{2}}{\alpha +\beta }\right] .
\end{eqnarray*}
\end{corollary}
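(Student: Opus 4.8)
The plan is to derive Corollary~\ref{co 2.1} as a direct specialization of Theorem~\ref{teo 2.1}, since the corollary simply replaces the derivative-dependent bound by a uniform bound $M$. First I would invoke Theorem~\ref{teo 2.1} to obtain the inequality whose right-hand side is
\begin{equation*}
\frac{\alpha }{\alpha +\beta }\frac{\left( x-a\right) ^{2}}{\left( s+2\right) \left( s+3\right) }\left[ \left\vert f^{\prime \prime }(x)\right\vert +\left\vert f^{\prime \prime }(a)\right\vert \right] +\frac{\beta }{\alpha +\beta }\frac{\left( b-x\right) ^{2}}{\left( s+2\right) \left( s+3\right) }\left[ \left\vert f^{\prime \prime }(x)\right\vert +\left\vert f^{\prime \prime }(b)\right\vert \right].
\end{equation*}
All assumptions of the theorem are in force, so this bound is valid without any further work.

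The next step is to apply the hypothesis $\left\vert f^{\prime \prime }\right\vert \leq M$ pointwise. In particular $\left\vert f^{\prime \prime }(x)\right\vert \leq M$, $\left\vert f^{\prime \prime }(a)\right\vert \leq M$, and $\left\vert f^{\prime \prime }(b)\right\vert \leq M$, so each bracketed sum of two absolute values is at most $2M$. Substituting these bounds replaces each bracket by $2M$ and yields the estimate
\begin{equation*}
\frac{\alpha }{\alpha +\beta }\frac{\left( x-a\right) ^{2}}{\left( s+2\right) \left( s+3\right) }\,2M+\frac{\beta }{\alpha +\beta }\frac{\left( b-x\right) ^{2}}{\left( s+2\right) \left( s+3\right) }\,2M.
\end{equation*}

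Finally I would factor out the common constant $\frac{2M}{\left( s+2\right) \left( s+3\right) }$ and the denominator $\alpha +\beta$, collecting the two terms into a single fraction to obtain
\begin{equation*}
\frac{2M}{\left( s+2\right) \left( s+3\right) }\left[ \frac{\alpha \left( x-a\right) ^{2}+\beta \left( b-x\right) ^{2}}{\alpha +\beta }\right],
\end{equation*}
which is exactly the claimed right-hand side. This entire argument is routine algebraic simplification; there is no genuine obstacle, as the only nontrivial estimates were already carried out in the proof of Theorem~\ref{teo 2.1}. The single point worth noting is that one must use the uniform bound $M$ for all three evaluation points $x$, $a$, and $b$ simultaneously, which is immediate from the assumption $\left\vert f^{\prime \prime }\right\vert \leq M$ on all of $[a,b]$.
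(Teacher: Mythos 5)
Your argument is correct and is exactly the intended derivation: the paper states Corollary~\ref{co 2.1} without proof as an immediate consequence of Theorem~\ref{teo 2.1}, obtained by bounding each of $\left\vert f^{\prime \prime }(x)\right\vert$, $\left\vert f^{\prime \prime }(a)\right\vert$, $\left\vert f^{\prime \prime }(b)\right\vert$ by $M$ and collecting terms. Nothing further is needed.
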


\begin{corollary}
\label{co 2.2} In Theorem \ref{teo 2.1}, if we choose $\alpha =\beta =1,$ we
obtain%
\begin{eqnarray*}
&&\left\vert f(x)+\frac{f(a)+f(b)}{2}-\left[ \frac{1}{x-a}\int_{a}^{x}f(t)dt+%
\frac{1}{b-x}\int_{x}^{b}f(t)dt\right] \right\vert \\
&\leq &\frac{\left( x-a\right) ^{2}+\left( b-x\right) ^{2}}{2\left(
s+2\right) \left( s+3\right) }\left\vert f^{\prime \prime }(x)\right\vert +%
\frac{1}{2\left( s+2\right) \left( s+3\right) }\left[ \left( x-a\right)
^{2}\left\vert f^{\prime \prime }(a)\right\vert +\left( b-x\right)
^{2}\left\vert f^{\prime \prime }(b)\right\vert \right] .
\end{eqnarray*}
\end{corollary}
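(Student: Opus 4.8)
The plan is to deduce this directly from Theorem \ref{teo 2.1} by specializing the free parameters to $\alpha = \beta = 1$; no new analytic work is required, so the argument is entirely a substitution followed by an algebraic regrouping. First I would record what the left-hand side of Theorem \ref{teo 2.1} becomes under this choice: since $\alpha + \beta = 2$, the weighted boundary term collapses to $\frac{\alpha f(a) + \beta f(b)}{\alpha+\beta} = \frac{f(a)+f(b)}{2}$, the prefactor $\frac{2}{\alpha+\beta}$ reduces to $1$, and the weights $\frac{\alpha}{x-a}$ and $\frac{\beta}{b-x}$ on the two integrals become $\frac{1}{x-a}$ and $\frac{1}{b-x}$ respectively. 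This reproduces exactly the absolute value appearing on the left of the claimed inequality.

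Next I would substitute $\alpha = \beta = 1$ into the right-hand bound of Theorem \ref{teo 2.1}. Each of the two fractions $\frac{\alpha}{\alpha+\beta}$ and $\frac{\beta}{\alpha+\beta}$ becomes $\frac{1}{2}$, which gives
\begin{equation*}
\frac{(x-a)^2}{2(s+2)(s+3)}\left[ \left\vert f''(x)\right\vert + \left\vert f''(a)\right\vert \right] + \frac{(b-x)^2}{2(s+2)(s+3)}\left[ \left\vert f''(x)\right\vert + \left\vert f''(b)\right\vert \right].
\end{equation*}

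The only remaining step is to separate the terms containing $\left\vert f''(x)\right\vert$ from those containing the endpoint values. Collecting the coefficient of $\left\vert f''(x)\right\vert$ yields the factor $\frac{(x-a)^2 + (b-x)^2}{2(s+2)(s+3)}$, while the endpoint contributions combine into $\frac{1}{2(s+2)(s+3)}\left[ (x-a)^2\left\vert f''(a)\right\vert + (b-x)^2\left\vert f''(b)\right\vert \right]$, which is precisely the stated bound. Since this is a mechanical specialization of an already-proved result, I do not anticipate any genuine obstacle; the only point demanding a little care is bookkeeping the common denominator $(s+2)(s+3)$ together with the factor $\tfrac{1}{2}$ correctly during the regrouping, so that the coefficient of $\left\vert f''(x)\right\vert$ is written as a single fraction over $2(s+2)(s+3)$ rather than being split between the two original summands.
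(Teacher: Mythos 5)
Your proof is correct and coincides with the paper's intent: Corollary \ref{co 2.2} is simply Theorem \ref{teo 2.1} with $\alpha=\beta=1$ substituted, the two prefactors $\frac{\alpha}{\alpha+\beta}$ and $\frac{\beta}{\alpha+\beta}$ becoming $\frac{1}{2}$, and the $\left\vert f''(x)\right\vert$ terms regrouped over the common denominator $2(s+2)(s+3)$. The paper states the corollary without proof precisely because this mechanical specialization is all that is needed, and your bookkeeping reproduces the stated bound exactly.
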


\begin{corollary}
\label{co 2.3} In Theorem \ref{teo 2.1}, if we choose $\alpha =\beta =\frac{1%
}{2}$ and $x=\frac{a+b}{2},$ we obtain the following Bullen type inequality;%
\begin{eqnarray*}
&&\left\vert \frac{1}{2}\left[ f\left( \frac{a+b}{2}\right) +\frac{f(a)+f(b)%
}{2}\right] -\frac{1}{b-a}\int_{a}^{b}f(t)dt\right\vert \\
&\leq &\frac{\left( b-a\right) ^{2}}{8\left( s+2\right) \left( s+3\right) }%
\left[ \left\vert f^{\prime \prime }\left( \frac{a+b}{2}\right) \right\vert +%
\frac{\left\vert f^{\prime \prime }(a)\right\vert +\left\vert f^{\prime
\prime }(b)\right\vert }{2}\right] .
\end{eqnarray*}
\end{corollary}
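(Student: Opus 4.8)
The plan is to specialize Theorem \ref{teo 2.1} to the symmetric choice of weights $\alpha = \beta = \frac{1}{2}$ together with the midpoint $x = \frac{a+b}{2}$, and then to simplify. First I would substitute these values into the left-hand side of Theorem \ref{teo 2.1}. Since $\alpha + \beta = 1$, the term $\frac{\alpha f(a)+\beta f(b)}{\alpha+\beta}$ reduces to $\frac{f(a)+f(b)}{2}$, the prefactor $\frac{2}{\alpha+\beta}$ equals $2$, and the coefficients $\frac{\alpha}{x-a}$ and $\frac{\beta}{b-x}$ become $\frac{1}{2(x-a)}$ and $\frac{1}{2(b-x)}$. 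Using $x-a = b-x = \frac{b-a}{2}$, both integral denominators equal $\frac{b-a}{2}$, so the two one-sided integral terms merge into $\frac{1}{x-a}\int_a^x f(t)\,dt + \frac{1}{b-x}\int_x^b f(t)\,dt = \frac{2}{b-a}\int_a^b f(t)\,dt$. Hence, under these substitutions, the left-hand side of Theorem \ref{teo 2.1} takes the form $\left| f\left(\frac{a+b}{2}\right) + \frac{f(a)+f(b)}{2} - \frac{2}{b-a}\int_a^b f(t)\,dt\right|$.

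The key observation in the second step is that this expression is exactly twice the quantity appearing in the corollary; factoring $\frac{1}{2}$ out of the absolute value rewrites it as $2\left| \frac{1}{2}\left[ f\left(\frac{a+b}{2}\right) + \frac{f(a)+f(b)}{2}\right] - \frac{1}{b-a}\int_a^b f(t)\,dt\right|$. Therefore the claimed left-hand side equals one half of the left-hand side of Theorem \ref{teo 2.1} under these choices, and it suffices to halve the corresponding bound.

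For the right-hand side, I would insert $x = \frac{a+b}{2}$ so that $(x-a)^2 = (b-x)^2 = \frac{(b-a)^2}{4}$, and use $\frac{\alpha}{\alpha+\beta} = \frac{\beta}{\alpha+\beta} = \frac{1}{2}$. The bound in Theorem \ref{teo 2.1} then becomes $\frac{(b-a)^2}{8(s+2)(s+3)}\left[ 2\left| f''\left(\frac{a+b}{2}\right)\right| + |f''(a)| + |f''(b)|\right]$. Multiplying by the factor $\frac{1}{2}$ produced in the previous step and grouping the boundary terms yields precisely $\frac{(b-a)^2}{8(s+2)(s+3)}\left[ \left| f''\left(\frac{a+b}{2}\right)\right| + \frac{|f''(a)|+|f''(b)|}{2}\right]$, which is the asserted estimate. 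There is no genuine obstacle here beyond careful bookkeeping; the only points requiring attention are tracking the factor $\frac{1}{2}$ introduced when the symmetric expression is recast in the Bullen-type form, and verifying that the two one-sided integrals correctly combine into a single integral over $[a,b]$.
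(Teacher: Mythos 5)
Your proposal is correct and is exactly the intended argument: the paper states the corollary as a direct specialization of Theorem \ref{teo 2.1} at $\alpha=\beta=\tfrac12$, $x=\tfrac{a+b}{2}$, with no further proof given, and your bookkeeping (merging the two one-sided integrals, extracting the factor $2$ from the absolute value, and halving the bound $\tfrac{(b-a)^2}{8(s+2)(s+3)}[2|f''(\tfrac{a+b}{2})|+|f''(a)|+|f''(b)|]$) reproduces the stated inequality precisely.
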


\begin{theorem}
\label{teo 2.2} Let $f:[a,b]\rightarrow 
\mathbb{R}
$ be an absolutely continuous mapping such that $f^{\prime \prime }\in
L[a,b].$ If $\left\vert f^{\prime \prime }\right\vert ^{q}$ is $s-$ convex
in the second sense on $[a,b]$ for some fixed $s\in (0,1]$ and $q>1$ with $%
\frac{1}{p}+\frac{1}{q}=1,$ then%
\begin{eqnarray*}
&&\left\vert f(x)+\frac{\alpha f(a)+\beta f(b)}{\alpha +\beta }-\frac{2}{%
\alpha +\beta }\left[ \frac{\alpha }{x-a}\int_{a}^{x}f(t)dt+\frac{\beta }{b-x%
}\int_{x}^{b}f(t)dt\right] \right\vert \\
&\leq &\left( \frac{\alpha }{\alpha +\beta }\right) ^{p}\frac{\left(
x-a\right) ^{1+\frac{1}{q}}}{\left( s+1\right) ^{\frac{1}{q}}}\left( \beta
\left( p+1,p+1\right) \right) ^{\frac{1}{p}}\left[ \left\vert f^{\prime
\prime }(x)\right\vert ^{q}+\left\vert f^{\prime \prime }(a)\right\vert ^{q}%
\right] ^{\frac{1}{q}} \\
&&+\left( \frac{\beta }{\alpha +\beta }\right) ^{p}\frac{\left( b-x\right)
^{1+\frac{1}{q}}}{\left( s+1\right) ^{\frac{1}{q}}}\left( \beta \left(
p+1,p+1\right) \right) ^{\frac{1}{p}}\left[ \left\vert f^{\prime \prime
}(b)\right\vert ^{q}+\left\vert f^{\prime \prime }(x)\right\vert ^{q}\right]
^{\frac{1}{q}}
\end{eqnarray*}%
where $\beta \left( x,y\right) =\int_{0}^{1}t^{x-1}\left( 1-t\right)
^{y-1}dt,$ $x,y>0$ is the Euler Beta function, $\alpha ,\beta \in 
\mathbb{R}
$ nonnegative and not both zero.
\end{theorem}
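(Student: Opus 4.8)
The plan is to open exactly as in Theorem \ref{teo 2.1}, but to insert a H\"older step before invoking $s$-convexity. First I would start from the identity in Lemma \ref{lem 2.1}, pass to absolute values, and use $\left|\int\right|\le\int|\cdot|$ to bound the left-hand side by $\int_a^b |K(x,t)|\,|f''(t)|\,dt$. Splitting the kernel according to its two-piece definition turns this into the sum
\[\frac{\alpha}{(\alpha+\beta)(x-a)}\int_a^x (t-a)(x-t)|f''(t)|\,dt + \frac{\beta}{(\alpha+\beta)(b-x)}\int_x^b (b-t)(t-x)|f''(t)|\,dt,\]
the same starting point used in the previous theorem.

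Next I would apply H\"older's inequality with exponents $p,q$ to each of the two integrals, keeping the polynomial kernel weight in the $p$-factor and $|f''|^q$ in the $q$-factor; for the first term this reads
\[\int_a^x (t-a)(x-t)|f''(t)|\,dt \le \left(\int_a^x \big[(t-a)(x-t)\big]^p\,dt\right)^{1/p}\left(\int_a^x |f''(t)|^q\,dt\right)^{1/q}.\]
The first factor is where the Beta function appears: after the substitution $t=a+(x-a)u$ the integral $\int_a^x [(t-a)(x-t)]^p\,dt$ becomes $(x-a)^{2p+1}\int_0^1 u^p(1-u)^p\,du = (x-a)^{2p+1}\beta(p+1,p+1)$, which is exactly the Euler Beta function named in the statement.

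For the second factor I would use the $s$-convexity of $|f''|^q$. Writing $t = \tfrac{t-a}{x-a}\,x + \tfrac{x-t}{x-a}\,a$ as a convex combination, the hypothesis gives $|f''(t)|^q \le \left(\frac{t-a}{x-a}\right)^s|f''(x)|^q + \left(\frac{x-t}{x-a}\right)^s|f''(a)|^q$. Integrating and using $\int_a^x \left(\frac{t-a}{x-a}\right)^s dt = \int_a^x \left(\frac{x-t}{x-a}\right)^s dt = \frac{x-a}{s+1}$ produces the factor $\frac{(x-a)^{1/q}}{(s+1)^{1/q}}\big[|f''(x)|^q + |f''(a)|^q\big]^{1/q}$, which accounts for the $(s+1)^{1/q}$ denominator and the $q$-mean of the endpoint values. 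The parallel computation on $[x,b]$ gives the symmetric term with $b$ in place of $a$ and $b-x$ in place of $x-a$. Reattaching the kernel coefficients $\frac{\alpha}{\alpha+\beta}$ and $\frac{\beta}{\alpha+\beta}$ and consolidating the powers of $(x-a)$ and $(b-x)$ produces the two-term bound of the statement.

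The main obstacle here is bookkeeping rather than ideas: I must track the powers of $(x-a)$ coming separately from the Beta-function factor (contributing $(x-a)^{2+1/p}$ after the $1/p$ power, against the $\frac{1}{x-a}$ from the kernel normalization) and from the $s$-convexity factor (contributing $(x-a)^{1/q}$), then merge them using $\frac1p+\frac1q=1$. The one point that genuinely requires care is the order of operations: $s$-convexity must be applied \emph{after} H\"older, to $|f''|^q$ rather than to $|f''|$, so that the convex-combination estimate is the one appropriate to the $q$-factor.
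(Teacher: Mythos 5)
Your proposal follows exactly the same route as the paper's own proof: bound the left-hand side by $\int_a^b\left\vert K(x,t)\right\vert\left\vert f''(t)\right\vert\,dt$, split at $x$, apply H\"older on each piece with the polynomial kernel in the $p$-factor (which yields the Euler Beta function after the affine substitution) and $\left\vert f''\right\vert^q$ in the $q$-factor, and only then invoke $s$-convexity of $\left\vert f''\right\vert^q$ on the convex combination $t=\frac{t-a}{x-a}x+\frac{x-t}{x-a}a$. The ideas and the order of operations are correct. The one substantive remark concerns the final bookkeeping, which you flag as the main obstacle and then declare to work out: carried through as you describe, the first term carries the factor $\frac{\alpha}{\alpha+\beta}\cdot\frac{1}{x-a}\cdot(x-a)^{2+\frac1p}\bigl(\beta(p+1,p+1)\bigr)^{1/p}\cdot\frac{(x-a)^{1/q}}{(s+1)^{1/q}}=\frac{\alpha}{\alpha+\beta}\,\frac{(x-a)^{2}}{(s+1)^{1/q}}\bigl(\beta(p+1,p+1)\bigr)^{1/p}$, i.e.\ the coefficient $\frac{\alpha}{\alpha+\beta}$ to the first power and $(x-a)$ to the power $2$, whereas the statement asserts $\bigl(\frac{\alpha}{\alpha+\beta}\bigr)^{p}$ and $(x-a)^{1+\frac1q}$. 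So your computation does not literally ``produce the two-term bound of the statement''; it produces a corrected version of it. The same discrepancy is present in the paper itself, whose proof stops at an intermediate expression in which the factor $(x-a)^{1/p}$ from $\bigl(\int_a^x(t-a)^p(x-t)^p(x-a)^{-2p}dt\bigr)^{1/p}$ has been dropped, and whose stated coefficient $\bigl(\frac{\alpha}{\alpha+\beta}\bigr)^{p}$ does not match even its own intermediate line. In short, your argument is the paper's argument done more carefully, and the careful version shows that the exponents in the stated theorem need amending rather than that your derivation has a gap.
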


\begin{proof}
From Lemma \ref{lem 2.1}, using the property of the modulus, H\"{o}lder
inequality and $s-$convexity of $\left\vert f^{\prime \prime }\right\vert
^{q},$ we can write%
\begin{eqnarray*}
&&\left\vert f(x)+\frac{\alpha f(a)+\beta f(b)}{\alpha +\beta }-\frac{2}{%
\alpha +\beta }\left[ \frac{\alpha }{x-a}\int_{a}^{x}f(t)dt+\frac{\beta }{b-x%
}\int_{x}^{b}f(t)dt\right] \right\vert \\
&\leq &\left( \int_{a}^{x}\left( \frac{\alpha }{\alpha +\beta }\frac{\left(
t-a\right) \left( x-t\right) }{x-a}\right) ^{p}dt\right) ^{\frac{1}{p}%
}\left( \int_{a}^{x}\left\vert f^{\prime \prime }\left( \frac{t-a}{x-a}x+%
\frac{x-t}{x-a}a\right) \right\vert ^{q}dt\right) ^{\frac{1}{q}} \\
&&+\left( \int_{x}^{b}\left( \frac{\beta }{\alpha +\beta }\frac{\left(
b-t\right) \left( x-t\right) }{b-x}\right) ^{p}dt\right) ^{\frac{1}{p}%
}\left( \int_{x}^{b}\left\vert f^{\prime \prime }\left( \frac{t-x}{b-x}b+%
\frac{b-t}{b-x}x\right) \right\vert ^{q}dt\right) ^{\frac{1}{q}} \\
&\leq &\frac{\alpha }{\alpha +\beta }\left( x-a\right) \left( \int_{a}^{x}%
\frac{\left( t-a\right) ^{p}\left( x-t\right) ^{p}}{\left( x-a\right)
^{p}\left( x-a\right) ^{p}}dt\right) ^{\frac{1}{p}}\left(
\int_{a}^{x}\left\vert f^{\prime \prime }\left( \frac{t-a}{x-a}x+\frac{x-t}{%
x-a}a\right) \right\vert ^{q}dt\right) ^{\frac{1}{q}} \\
&&+\frac{\beta }{\alpha +\beta }\left( b-x\right) \left( \int_{x}^{b}\frac{%
\left( b-t\right) ^{p}\left( x-t\right) ^{p}}{\left( b-x\right) ^{p}\left(
b-x\right) ^{p}}dt\right) ^{\frac{1}{p}}\left( \int_{x}^{b}\left\vert
f^{\prime \prime }\left( \frac{t-x}{b-x}b+\frac{b-t}{b-x}x\right)
\right\vert ^{q}dt\right) ^{\frac{1}{q}} \\
&\leq &\frac{\alpha }{\alpha +\beta }\left( x-a\right) \left( \beta \left(
p+1,p+1\right) \right) ^{\frac{1}{p}}\left[ \int_{a}^{x}\left( \frac{t-a}{x-a%
}\right) ^{s}\left\vert f^{\prime \prime }(x)\right\vert ^{q}+\left( \frac{%
x-t}{x-a}\right) ^{s}\left\vert f^{\prime \prime }(a)\right\vert ^{q}\right]
^{\frac{1}{q}} \\
&&+\frac{\beta }{\alpha +\beta }\left( b-x\right) \left( \beta \left(
p+1,p+1\right) \right) ^{\frac{1}{p}}\left[ \int_{x}^{b}\left( \frac{t-x}{b-x%
}\right) ^{s}\left\vert f^{\prime \prime }(b)\right\vert ^{q}+\left( \frac{%
b-t}{b-x}\right) ^{s}\left\vert f^{\prime \prime }(x)\right\vert ^{q}\right]
^{\frac{1}{q}}.
\end{eqnarray*}%
We get the desired result by making use of the necessary computation.
\end{proof}

\begin{theorem}
\label{teo 2.3} Under the assumptions of Theorem \ref{teo 2.2}, the
following inequality 
\begin{eqnarray*}
&&\left\vert f(x)+\frac{\alpha f(a)+\beta f(b)}{\alpha +\beta }-\frac{2}{%
\alpha +\beta }\left[ \frac{\alpha }{x-a}\int_{a}^{x}f(t)dt+\frac{\beta }{b-x%
}\int_{x}^{b}f(t)dt\right] \right\vert \\
&\leq &\beta \left( p+1,p+1\right) ^{\frac{1}{p}}\left( \left( \frac{\alpha 
}{\alpha +\beta }\right) ^{p}\left( x-a\right) ^{p}+\left( \frac{\beta }{%
\alpha +\beta }\right) ^{p}\left( b-x\right) ^{p}\right) ^{\frac{1}{p}} \\
&&\times \left( \frac{b-a}{s+1}\right) ^{\frac{1}{q}}\left( \left\vert
f^{\prime \prime }(a)\right\vert ^{q}+\left\vert f^{\prime \prime
}(b)\right\vert ^{q}\right) ^{\frac{1}{q}}
\end{eqnarray*}%
holds where $\beta \left( x,y\right) $ is the Euler Beta function.
\end{theorem}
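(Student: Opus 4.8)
The plan is to return to the identity of Lemma~\ref{lem 2.1}, pass to absolute values, and bound the right-hand side by $\int_a^b |K(x,t)|\,|f''(t)|\,dt$, which separates along the two branches of the kernel into an integral over $[a,x]$ and one over $[x,b]$. The only structural difference from Theorem~\ref{teo 2.2} lies in how the H\"older inequality is organized: instead of keeping the two $L^q$-pieces of $|f''|$ attached to their own subintervals, I would arrange the estimate so that those two $L^q$ integrals recombine into a single integral of $|f''|^q$ over the whole interval $[a,b]$. This recombination is precisely what generates the factor $\left(\frac{b-a}{s+1}\right)^{1/q}\left(|f''(a)|^q+|f''(b)|^q\right)^{1/q}$ appearing in the claimed bound.

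Concretely, I would first apply the continuous H\"older inequality on each branch, e.g.
\[ \int_a^x \frac{\alpha}{\alpha+\beta}\frac{(t-a)(x-t)}{x-a}\,|f''(t)|\,dt \le \left(\int_a^x\left(\frac{\alpha}{\alpha+\beta}\frac{(t-a)(x-t)}{x-a}\right)^p dt\right)^{1/p}\left(\int_a^x |f''(t)|^q\,dt\right)^{1/q}, \]
and similarly on $[x,b]$. The $L^p$-factors are evaluated by the substitution $t=a+(x-a)u$ (resp. $t=x+(b-x)u$), which turns $\int_a^x\big((t-a)(x-t)\big)^p dt$ into $(x-a)^{2p+1}\int_0^1\big(u(1-u)\big)^p du=(x-a)^{2p+1}\beta(p+1,p+1)$; dividing by the $(x-a)^p$ from the kernel leaves the constants $\frac{\alpha}{\alpha+\beta}(x-a)^{(p+1)/p}\beta(p+1,p+1)^{1/p}$ and its $\beta,(b-x)$ analogue. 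I would then invoke the two-term discrete H\"older inequality $C_1D_1+C_2D_2\le (C_1^p+C_2^p)^{1/p}(D_1^q+D_2^q)^{1/q}$ with $D_i=\left(\int|f''|^q\right)^{1/q}$, so that the $C_i$ collapse into the single factor $\beta(p+1,p+1)^{1/p}\left(\left(\frac{\alpha}{\alpha+\beta}\right)^p(x-a)^{p+1}+\left(\frac{\beta}{\alpha+\beta}\right)^p(b-x)^{p+1}\right)^{1/p}$ while $D_1^q+D_2^q=\int_a^x|f''|^q+\int_x^b|f''|^q=\int_a^b|f''(t)|^q\,dt$.

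The final step is to bound this single integral by using that $|f''|^q$ is $s$-convex in the second sense, but now applied across the entire interval: writing $t=\frac{t-a}{b-a}b+\frac{b-t}{b-a}a$ gives $|f''(t)|^q\le\left(\frac{t-a}{b-a}\right)^s|f''(b)|^q+\left(\frac{b-t}{b-a}\right)^s|f''(a)|^q$, and integrating term by term (each power contributing $\frac{b-a}{s+1}$) yields $\int_a^b|f''(t)|^q\,dt\le\frac{b-a}{s+1}\left(|f''(a)|^q+|f''(b)|^q\right)$. Substituting this back gives the asserted inequality. I expect the only delicate point to be the bookkeeping in the discrete H\"older step, namely ensuring that it is the two $L^q$ integrals that are merged so that $s$-convexity is used once over $[a,b]$ rather than twice over the subintervals; note also that the power of $(x-a)$ emerging from the kernel norm is $(x-a)^{p+1}$, one higher than the exponent displayed in the statement, which is a direct consequence of the Beta-function evaluation above.
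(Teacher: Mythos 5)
Your argument is essentially the paper's own proof: the paper applies the integral H\"older inequality once over all of $[a,b]$ to get $\bigl(\int_a^b|K(x,t)|^p dt\bigr)^{1/p}\bigl(\int_a^b|f''(t)|^q dt\bigr)^{1/q}$ and then uses $s$-convexity of $|f''|^q$ across the whole interval, while you reach the identical bound by H\"older on each branch followed by the two-term discrete H\"older inequality, which is just a two-step derivation of the same global estimate. Your closing observation is also correct and worth recording: the kernel's $L^p$ norm evaluates to $\beta(p+1,p+1)^{1/p}\bigl(\bigl(\tfrac{\alpha}{\alpha+\beta}\bigr)^p(x-a)^{p+1}+\bigl(\tfrac{\beta}{\alpha+\beta}\bigr)^p(b-x)^{p+1}\bigr)^{1/p}$, so the exponents $(x-a)^p$, $(b-x)^p$ printed in the theorem are a typo; the paper hides this behind ``the necessary computation,'' but its own displayed intermediate step leads to the same $p+1$ exponent you obtain.
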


\begin{proof}
From Lemma \ref{lem 2.1}, using the property of the modulus, H\"{o}lder
inequality and $s-$convexity of $\left\vert f^{\prime \prime }\right\vert
^{q},$ we can write%
\begin{eqnarray*}
&&\left\vert f(x)+\frac{\alpha f(a)+\beta f(b)}{\alpha +\beta }-\frac{2}{%
\alpha +\beta }\left[ \frac{\alpha }{x-a}\int_{a}^{x}f(t)dt+\frac{\beta }{b-x%
}\int_{x}^{b}f(t)dt\right] \right\vert \\
&\leq &\left( \int_{a}^{b}\left\vert K(x,t)\right\vert ^{p}dt\right) ^{\frac{%
1}{p}}\left( \int_{a}^{b}\left\vert f^{\prime \prime }(t)\right\vert
^{q}dt\right) ^{\frac{1}{q}} \\
&=&\left( \int_{a}^{x}\left( \frac{\alpha }{\alpha +\beta }\frac{\left(
t-a\right) \left( x-t\right) }{\left( x-a\right) \left( x-a\right) }\left(
x-a\right) \right) ^{p}dt+\int_{x}^{b}\left( \frac{\beta }{\alpha +\beta }%
\frac{\left( b-t\right) \left( x-t\right) }{\left( b-x\right) \left(
b-x\right) }\left( b-x\right) \right) ^{p}dt\right) ^{\frac{1}{p}} \\
&&\times \left( \int_{a}^{b}\left\vert f^{\prime \prime }\left( \frac{t-a}{%
b-a}b+\frac{b-t}{b-a}a\right) \right\vert ^{q}dt\right) ^{\frac{1}{q}}.
\end{eqnarray*}%
We get the desired result by making use of the necessary computation.
\end{proof}

The next result is obtained by using the well-known power-mean integral
ineqaulity:

\begin{theorem}
\label{teo 2.4} Let $f:[a,b]\rightarrow 
\mathbb{R}
$ be an absolutely continuous mapping such that $f^{\prime \prime }\in
L[a,b].$ If $\left\vert f^{\prime \prime }\right\vert ^{q}$ is $s-$ convex
in the second sense on $[a,b]$ for some fixed $s\in (0,1]$ and $q\geq 1,$
then 
\begin{eqnarray*}
&&\left\vert f(x)+\frac{\alpha f(a)+\beta f(b)}{\alpha +\beta }-\frac{2}{%
\alpha +\beta }\left[ \frac{\alpha }{x-a}\int_{a}^{x}f(t)dt+\frac{\beta }{b-x%
}\int_{x}^{b}f(t)dt\right] \right\vert \\
&\leq &\frac{\alpha }{\alpha +\beta }\frac{\left( x-a\right) ^{2}}{6^{1-^{%
\frac{1}{q}}}\left[ \left( s+2\right) \left( s+3\right) \right] ^{\frac{1}{q}%
}}\left[ \left\vert f^{\prime \prime }(x)\right\vert ^{q}+\left\vert
f^{\prime \prime }(a)\right\vert ^{q}\right] ^{\frac{1}{q}} \\
&&+\frac{\beta }{\alpha +\beta }\frac{\left( b-x\right) ^{2}}{6^{1-^{\frac{1%
}{q}}}\left[ \left( s+2\right) \left( s+3\right) \right] ^{\frac{1}{q}}}%
\left[ \left\vert f^{\prime \prime }(b)\right\vert ^{q}+\left\vert f^{\prime
\prime }(x)\right\vert ^{q}\right] ^{\frac{1}{q}}
\end{eqnarray*}%
holds where $\alpha ,\beta \in 
\mathbb{R}
$ nonnegative and not both zero.
\end{theorem}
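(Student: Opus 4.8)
The plan is to follow exactly the template of Theorems \ref{teo 2.1} and \ref{teo 2.2}, starting from the identity in Lemma \ref{lem 2.1}, but to replace the H\"{o}lder step by the power-mean integral inequality. First I would take absolute values in the identity and bound the left-hand side by $\int_{a}^{b}\left\vert K(x,t)\right\vert \left\vert f^{\prime \prime }(t)\right\vert dt$, then split this over $[a,x]$ and $[x,b]$ precisely as in the proof of Theorem \ref{teo 2.1}, writing the first piece as $\frac{\alpha }{\left( \alpha +\beta \right) \left( x-a\right) }\int_{a}^{x}\left( t-a\right) \left( x-t\right) \left\vert f^{\prime \prime }(t)\right\vert dt$ and the second piece symmetrically with the weight $\left( b-t\right) \left( t-x\right)$.

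The key step is to apply the power-mean inequality $\int w\left\vert h\right\vert \leq \left( \int w\right) ^{1-\frac{1}{q}}\left( \int w\left\vert h\right\vert ^{q}\right) ^{\frac{1}{q}}$ to each piece, taking the nonnegative weight $w(t)=\left( t-a\right) \left( x-t\right)$ on $[a,x]$. This separates the bound into an \emph{easy} factor $\left( \int_{a}^{x}\left( t-a\right) \left( x-t\right) dt\right) ^{1-\frac{1}{q}}$ and a factor containing $\left\vert f^{\prime \prime }\right\vert ^{q}$. I would compute the easy factor directly, namely $\int_{a}^{x}\left( t-a\right) \left( x-t\right) dt=\frac{\left( x-a\right) ^{3}}{6}$, which is exactly what produces the constant $6^{1-\frac{1}{q}}$ appearing in the denominator of the stated bound.

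Next I would invoke the $s-$convexity of $\left\vert f^{\prime \prime }\right\vert ^{q}$ in the same manner as in Theorem \ref{teo 2.1}: writing $t=\frac{t-a}{x-a}x+\frac{x-t}{x-a}a$ gives $\left\vert f^{\prime \prime }(t)\right\vert ^{q}\leq \left( \frac{t-a}{x-a}\right) ^{s}\left\vert f^{\prime \prime }(x)\right\vert ^{q}+\left( \frac{x-t}{x-a}\right) ^{s}\left\vert f^{\prime \prime }(a)\right\vert ^{q}$, so the weighted integral $\int_{a}^{x}\left( t-a\right) \left( x-t\right) \left\vert f^{\prime \prime }(t)\right\vert ^{q}dt$ is controlled by the two Beta-type identities already recorded in the proof of Theorem \ref{teo 2.1}, that is, $\int_{a}^{x}\left( t-a\right) ^{s+1}\left( x-t\right) dt=\int_{a}^{x}\left( t-a\right) \left( x-t\right) ^{s+1}dt=\frac{\left( x-a\right) ^{s+3}}{\left( s+2\right) \left( s+3\right) }$. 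This yields the bound $\frac{\left( x-a\right) ^{3}}{\left( s+2\right) \left( s+3\right) }\left[ \left\vert f^{\prime \prime }(x)\right\vert ^{q}+\left\vert f^{\prime \prime }(a)\right\vert ^{q}\right]$ for the inner factor.

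Finally I would collect the powers of $\left( x-a\right)$: the prefactor $\frac{1}{x-a}$, the easy factor $\left( \left( x-a\right) ^{3}\right) ^{1-\frac{1}{q}}$, and the $s-$convexity factor $\left( \left( x-a\right) ^{3}\right) ^{\frac{1}{q}}$ combine to give the exponent $-1+3\left( 1-\frac{1}{q}\right) +\frac{3}{q}=2$, while the numerical constants assemble into $6^{1-\frac{1}{q}}\left[ \left( s+2\right) \left( s+3\right) \right] ^{\frac{1}{q}}$ in the denominator, matching the first summand of the claim. Treating the interval $[x,b]$ symmetrically with weight $\left( b-t\right) \left( t-x\right)$ produces the second summand with $\left( b-x\right) ^{2}$ and $\left[ \left\vert f^{\prime \prime }(b)\right\vert ^{q}+\left\vert f^{\prime \prime }(x)\right\vert ^{q}\right] ^{\frac{1}{q}}$, and summing the two pieces completes the argument. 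I expect the only real obstacle to be this exponent bookkeeping together with correctly identifying that the power-mean weight integral evaluates to $\frac{\left( x-a\right) ^{3}}{6}$; everything else is parallel to the earlier theorems.
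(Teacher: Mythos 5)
Your proposal is correct and follows essentially the same route as the paper: bound the left-hand side via Lemma \ref{lem 2.1}, apply the power-mean inequality with weight $\left( t-a\right) \left( x-t\right) $ on $[a,x]$ (and symmetrically on $[x,b]$), evaluate $\int_{a}^{x}\left( t-a\right) \left( x-t\right) dt=\frac{\left( x-a\right) ^{3}}{6}$, and use the $s-$convexity of $\left\vert f^{\prime \prime }\right\vert ^{q}$ together with the Beta-type identities from Theorem \ref{teo 2.1}. Your exponent bookkeeping $-1+3\left( 1-\frac{1}{q}\right) +\frac{3}{q}=2$ and the assembled constants match the paper's computation exactly.
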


\begin{proof}
From Lemma \ref{lem 2.1}, using the property of the modulus, power-mean
integral inequality and $s-$convexity of $\left\vert f^{\prime \prime
}\right\vert ^{q},$ we can write%
\begin{eqnarray*}
&&\left\vert f(x)+\frac{\alpha f(a)+\beta f(b)}{\alpha +\beta }-\frac{2}{%
\alpha +\beta }\left[ \frac{\alpha }{x-a}\int_{a}^{x}f(t)dt+\frac{\beta }{b-x%
}\int_{x}^{b}f(t)dt\right] \right\vert  \\
&\leq &\frac{\alpha }{\left( \alpha +\beta \right) \left( x-a\right) }\left(
\int_{a}^{x}\left( t-a\right) \left( x-t\right) dt\right) ^{1-\frac{1}{q}} \\
&&\times \left( \int_{a}^{x}\left( t-a\right) \left( x-t\right) \left(
\left( \frac{t-a}{x-a}\right) ^{s}\left\vert f^{\prime \prime
}(x)\right\vert ^{q}+\left( \frac{x-t}{x-a}\right) ^{s}\left\vert f^{\prime
\prime }(a)\right\vert ^{q}\right) dt\right) ^{\frac{1}{q}} \\
&&+\frac{\beta }{\left( \alpha +\beta \right) \left( b-x\right) }\left(
\int_{x}^{b}\left( b-t\right) \left( t-x\right) dt\right) ^{1-\frac{1}{q}} \\
&&\times \left( \int_{x}^{b}\left( t-b\right) \left( t-x\right) \left(
\left( \frac{t-x}{b-x}\right) ^{s}\left\vert f^{\prime \prime
}(b)\right\vert ^{q}+\left( \frac{b-t}{b-x}\right) ^{s}\left\vert f^{\prime
\prime }(x)\right\vert ^{q}\right) dt\right) ^{\frac{1}{q}} \\
&=&\frac{\alpha }{\left( \alpha +\beta \right) \left( x-a\right) }\left( 
\frac{\left( x-a\right) ^{3}}{6}\right) ^{1-\frac{1}{q}}\left( \frac{\left(
x-a\right) ^{3}}{\left( s+2\right) \left( s+3\right) }\left( \left\vert
f^{\prime \prime }(x)\right\vert ^{q}+\left\vert f^{\prime \prime
}(a)\right\vert ^{q}\right) \right) ^{\frac{1}{q}} \\
&&+\frac{\beta }{\left( \alpha +\beta \right) \left( b-x\right) }\left( 
\frac{\left( b-x\right) ^{3}}{6}\right) ^{1-\frac{1}{q}}\left( \frac{\left(
b-x\right) ^{3}}{\left( s+2\right) \left( s+3\right) }\left( \left\vert
f^{\prime \prime }(b)\right\vert ^{q}+\left\vert f^{\prime \prime
}(x)\right\vert ^{q}\right) \right) ^{\frac{1}{q}}.
\end{eqnarray*}%
The proof is completed.
\end{proof}

\begin{remark}
\label{rem 2.1} In Theorem \ref{teo 2.4}, if we choose $q=1$ Theorem \ref%
{teo 2.4} reduces to Theorem \ref{teo 2.1}.
\end{remark}

\begin{remark}
\label{rem 2.2} If we choose $s=1$ for all the results, we obtain new
results for convex functions.
\end{remark}

\section{inequalities for $\log -$convex functions}

In this section, we will give some results for $\log -$convex functions. For
the simplicity, we will use the following notations:%
\begin{eqnarray*}
\kappa &=&\left( \frac{\left\vert f^{\prime \prime }(x)\right\vert }{%
\left\vert f^{\prime \prime }(a)\right\vert }\right) ^{\frac{1}{x-a}} \\
\tau &=&\left( \frac{\left\vert f^{\prime \prime }(b)\right\vert }{%
\left\vert f^{\prime \prime }(x)\right\vert }\right) ^{\frac{1}{b-x}}.
\end{eqnarray*}

\begin{theorem}
\label{teo 3.1} Let $f:[a,b]\rightarrow 
\mathbb{R}
$ be an absolutely continuous mapping such that $f^{\prime \prime }\in
L[a,b].$ If $\left\vert f^{\prime \prime }\right\vert $ is $\log -$ convex
function on $[a,b]$ and $\kappa \neq 1,\tau \neq 1,$ then%
\begin{eqnarray*}
&&\left\vert f(x)+\frac{\alpha f(a)+\beta f(b)}{\alpha +\beta }-\frac{2}{%
\alpha +\beta }\left[ \frac{\alpha }{x-a}\int_{a}^{x}f(t)dt+\frac{\beta }{b-x%
}\int_{x}^{b}f(t)dt\right] \right\vert \\
&\leq &\frac{\alpha }{\left( \alpha +\beta \right) \left( x-a\right) }\left( 
\frac{\left\vert f^{\prime \prime }(a)\right\vert ^{x}}{\left\vert f^{\prime
\prime }(x)\right\vert ^{a}}\right) ^{\frac{1}{x-a}}\left( \frac{2\left(
\kappa ^{a}-\kappa ^{x}\right) -(a-x)\left( \kappa ^{a}+\kappa ^{x}\right)
\log \kappa }{\log ^{3}\kappa }\right) \\
&&+\frac{\beta }{\left( \alpha +\beta \right) \left( b-x\right) }\left( 
\frac{\left\vert f^{\prime \prime }(x)\right\vert ^{b}}{\left\vert f^{\prime
\prime }(b)\right\vert ^{x}}\right) ^{\frac{1}{b-x}}\left( \frac{2\tau
^{x}-2\tau ^{b}+(b-x)\left( \tau ^{b}+\tau ^{x}\right) \log \tau }{\log
^{3}\tau }\right)
\end{eqnarray*}%
holds where $\kappa \neq 1,\tau \neq 1,$ $\alpha ,\beta \in 
\mathbb{R}
$ nonnegative and not both zero.
\end{theorem}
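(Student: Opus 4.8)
The plan is to follow the same template as Theorems \ref{teo 2.1}--\ref{teo 2.4}: start from the identity in Lemma \ref{lem 2.1}, take absolute values, and split the kernel integral at $x$. Passing the modulus inside $\int_a^b |K(x,t)|\,|f''(t)|\,dt$ and using the explicit form of $K$ produces, exactly as in the proof of Theorem \ref{teo 2.1}, the upper bound
\begin{equation*}
\frac{\alpha}{(\alpha+\beta)(x-a)}\int_a^x (t-a)(x-t)\,|f''(t)|\,dt + \frac{\beta}{(\alpha+\beta)(b-x)}\int_x^b (b-t)(t-x)\,|f''(t)|\,dt
\end{equation*}
for the absolute value appearing on the left-hand side of the claim. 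The only structural difference is that now I bound $|f''(t)|$ with the $\log$-convexity hypothesis rather than $s$-convexity: writing $t=\frac{t-a}{x-a}x+\frac{x-t}{x-a}a$ on $[a,x]$ gives $|f''(t)|\leq |f''(x)|^{(t-a)/(x-a)}|f''(a)|^{(x-t)/(x-a)}$, and the analogous geometric-mean bound with $b,x$ holds on $[x,b]$.

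The key algebraic step is to rewrite each geometric-mean bound as a constant times a pure exponential in $t$. Taking logarithms one checks that
\begin{equation*}
|f''(x)|^{\frac{t-a}{x-a}}\,|f''(a)|^{\frac{x-t}{x-a}} = \left(\frac{|f''(a)|^x}{|f''(x)|^a}\right)^{\frac{1}{x-a}}\kappa^{t},
\end{equation*}
since the coefficient of $t$ in the exponent equals $\frac{1}{x-a}\log\frac{|f''(x)|}{|f''(a)|}=\log\kappa$ while the $t$-free part is precisely the displayed prefactor; an identical identity with $\tau^{t}$ and prefactor $\left(|f''(x)|^b/|f''(b)|^x\right)^{1/(b-x)}$ holds on $[x,b]$. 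This is exactly where the two outer factors in the statement come from. After this substitution the whole problem reduces to evaluating the two integrals $\int_a^x (t-a)(x-t)\kappa^{t}\,dt$ and $\int_x^b (b-t)(t-x)\tau^{t}\,dt$.

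The main obstacle is this integral evaluation: a quadratic in $t$ against an exponential $\kappa^{t}=e^{t\log\kappa}$. I would integrate by parts twice (equivalently, combine the standard antiderivatives of $t^2 e^{ct}$, $t\,e^{ct}$, $e^{ct}$). Each integration introduces a factor $1/\log\kappa$, which accounts for the $\log^3\kappa$ in the denominator and explains why the hypothesis $\kappa\neq 1$ (so that $\log\kappa\neq 0$) is required. A direct computation, in which the quadratic and part of the linear contributions at the endpoints $a$ and $x$ cancel, yields
\begin{equation*}
\int_a^x (t-a)(x-t)\kappa^{t}\,dt = \frac{2(\kappa^{a}-\kappa^{x})-(a-x)(\kappa^{a}+\kappa^{x})\log\kappa}{\log^{3}\kappa},
\end{equation*}
and the $\tau$-integral is obtained verbatim under the relabelling $a\mapsto x$, $x\mapsto b$, $\kappa\mapsto\tau$. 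Multiplying each of these by its prefactor and by the weight $\frac{\alpha}{(\alpha+\beta)(x-a)}$ (respectively $\frac{\beta}{(\alpha+\beta)(b-x)}$) assembles the two summands of the asserted bound, completing the proof.
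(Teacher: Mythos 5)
Your proposal is correct and follows essentially the same route as the paper's own proof: apply Lemma \ref{lem 2.1}, split the kernel integral at $x$, use $\log$-convexity to replace $|f''(t)|$ by the geometric mean, rewrite that as $\bigl(|f''(a)|^{x}/|f''(x)|^{a}\bigr)^{1/(x-a)}\kappa^{t}$ (resp.\ the $\tau$ analogue), and evaluate $\int_a^x(t-a)(x-t)\kappa^{t}\,dt$ by repeated integration by parts. The only difference is that you make explicit the integral computation which the paper dismisses as ``a simple computation,'' and your closed form for it checks out.
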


\begin{proof}
From Lemma \ref{lem 2.1} and by using the $\log -$ convexity of $\left\vert
f^{\prime \prime }\right\vert ,$ we can write%
\begin{eqnarray*}
&&\left\vert f(x)+\frac{\alpha f(a)+\beta f(b)}{\alpha +\beta }-\frac{2}{%
\alpha +\beta }\left[ \frac{\alpha }{x-a}\int_{a}^{x}f(t)dt+\frac{\beta }{b-x%
}\int_{x}^{b}f(t)dt\right] \right\vert \\
&\leq &\int_{a}^{x}\frac{\alpha }{\alpha +\beta }\frac{1}{x-a}\left\vert
t-a\right\vert \left\vert x-t\right\vert \left\vert f^{\prime \prime
}(t)\right\vert dt \\
&&+\int_{x}^{b}\frac{\beta }{\alpha +\beta }\frac{1}{b-x}\left\vert
b-t\right\vert \left\vert x-t\right\vert \left\vert f^{\prime \prime
}(t)\right\vert dt \\
&=&\frac{\alpha }{\left( \alpha +\beta \right) \left( x-a\right) }%
\int_{a}^{x}\left( t-a\right) \left( x-t\right) \left\vert f^{\prime \prime
}\left( \frac{t-a}{x-a}x+\frac{x-t}{x-a}a\right) \right\vert dt \\
&&+\frac{\beta }{\left( \alpha +\beta \right) \left( b-x\right) }%
\int_{x}^{b}\left( b-t\right) \left( t-x\right) \left\vert f^{\prime \prime
}\left( \frac{t-x}{b-x}b+\frac{b-t}{b-x}x\right) \right\vert dt \\
&\leq &\frac{\alpha }{\left( \alpha +\beta \right) \left( x-a\right) }%
\int_{a}^{x}\left( t-a\right) \left( x-t\right) \left[ \left\vert f^{\prime
\prime }(x)\right\vert ^{\frac{t-a}{x-a}}\left\vert f^{\prime \prime
}(a)\right\vert ^{\frac{x-t}{x-a}}\right] dt \\
&&+\frac{\beta }{\left( \alpha +\beta \right) \left( b-x\right) }%
\int_{x}^{b}\left( b-t\right) \left( t-x\right) \left[ \left\vert f^{\prime
\prime }(b)\right\vert ^{\frac{t-x}{b-x}}\left\vert f^{\prime \prime
}(x)\right\vert ^{\frac{b-t}{b-x}}\right] dt \\
&=&\frac{\alpha }{\left( \alpha +\beta \right) \left( x-a\right) }\left( 
\frac{\left\vert f^{\prime \prime }(a)\right\vert ^{x}}{\left\vert f^{\prime
\prime }(x)\right\vert ^{a}}\right) ^{\frac{1}{x-a}}\int_{a}^{x}\left(
t-a\right) \left( x-t\right) \kappa ^{t}dt \\
&&+\frac{\beta }{\left( \alpha +\beta \right) \left( b-x\right) }\left( 
\frac{\left\vert f^{\prime \prime }(x)\right\vert ^{b}}{\left\vert f^{\prime
\prime }(b)\right\vert ^{x}}\right) ^{\frac{1}{b-x}}\int_{x}^{b}\left(
b-t\right) \left( t-x\right) \tau ^{t}dt.
\end{eqnarray*}%
By a simple computation, we get the result.
\end{proof}

\begin{corollary}
\label{co 3.1} In Theorem \ref{teo 3.1}, if we choose $\alpha =\beta =1,$ we
obtain 
\begin{eqnarray*}
&&\left\vert f(x)+\frac{f(a)+f(b)}{2}-\left[ \frac{1}{x-a}\int_{a}^{x}f(t)dt+%
\frac{1}{b-x}\int_{x}^{b}f(t)dt\right] \right\vert \\
&\leq &\frac{1}{2\left( x-a\right) }\left( \frac{\left\vert f^{\prime \prime
}(a)\right\vert ^{x}}{\left\vert f^{\prime \prime }(x)\right\vert ^{a}}%
\right) ^{\frac{1}{x-a}}\left( \frac{2\left( \kappa ^{a}-\kappa ^{x}\right)
-(a-x)\left( \kappa ^{a}+\kappa ^{x}\right) \log \kappa }{\log ^{3}\kappa }%
\right) \\
&&+\frac{1}{2\left( b-x\right) }\left( \frac{\left\vert f^{\prime \prime
}(x)\right\vert ^{b}}{\left\vert f^{\prime \prime }(b)\right\vert ^{x}}%
\right) ^{\frac{1}{b-x}}\left( \frac{2\tau ^{x}-2\tau ^{b}+(b-x)\left( \tau
^{b}+\tau ^{x}\right) \log \tau }{\log ^{3}\tau }\right) .
\end{eqnarray*}
\end{corollary}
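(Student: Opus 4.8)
The plan is to obtain the corollary as a direct specialization of Theorem~\ref{teo 3.1} to the symmetric choice $\alpha =\beta =1$. Because the inequality in Theorem~\ref{teo 3.1} is valid for every pair of nonnegative reals $\alpha ,\beta $ not both zero, it suffices to substitute $\alpha =\beta =1$ and carry out the resulting arithmetic simplifications; no new analytic input is required.

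First I would simplify the left-hand side. With $\alpha +\beta =2$, the weighted endpoint term collapses to $\frac{\alpha f(a)+\beta f(b)}{\alpha +\beta }=\frac{f(a)+f(b)}{2}$, and the prefactor of the bracketed integral terms becomes $\frac{2}{\alpha +\beta }=1$. Hence the expression inside the modulus reduces exactly to $f(x)+\frac{f(a)+f(b)}{2}-\left[ \frac{1}{x-a}\int_{a}^{x}f(t)\,dt+\frac{1}{b-x}\int_{x}^{b}f(t)\,dt\right]$, which is precisely the quantity appearing on the left-hand side of the corollary.

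Next I would track the coefficients on the right-hand side. The weights standing in front of the two logarithmic expressions become $\frac{\alpha }{(\alpha +\beta )(x-a)}=\frac{1}{2(x-a)}$ and $\frac{\beta }{(\alpha +\beta )(b-x)}=\frac{1}{2(b-x)}$, respectively. The quantities $\kappa ,\tau $, the factors $\left( \frac{|f^{\prime \prime }(a)|^{x}}{|f^{\prime \prime }(x)|^{a}}\right) ^{1/(x-a)}$ and $\left( \frac{|f^{\prime \prime }(x)|^{b}}{|f^{\prime \prime }(b)|^{x}}\right) ^{1/(b-x)}$, together with the two bracketed terms involving $\log ^{3}\kappa $ and $\log ^{3}\tau $, do not involve $\alpha $ or $\beta $ and therefore carry over unchanged. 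Assembling these pieces yields exactly the stated inequality. I do not anticipate any genuine obstacle here: the only work is the bookkeeping of the constant prefactors, and the corollary is nothing more than the $\alpha =\beta =1$ instance of the theorem already established.
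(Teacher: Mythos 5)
Your proposal is correct and is exactly the argument the paper intends: Corollary \ref{co 3.1} is obtained by the direct substitution $\alpha =\beta =1$ into Theorem \ref{teo 3.1}, and your bookkeeping of the prefactors ($\frac{2}{\alpha +\beta }=1$, $\frac{\alpha }{(\alpha +\beta )(x-a)}=\frac{1}{2(x-a)}$, etc.) matches the stated inequality. No further comment is needed.
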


\begin{corollary}
\label{co 3.2} In Theorem \ref{teo 3.1}, if we choose $\alpha =\beta =\frac{1%
}{2}$ and $x=\frac{a+b}{2},$ we obtain the following Bullen type inequality;%
\begin{eqnarray*}
&&\left\vert \frac{1}{2}\left[ f\left( \frac{a+b}{2}\right) +\frac{f(a)+f(b)%
}{2}\right] -\frac{1}{b-a}\int_{a}^{b}f(t)dt\right\vert \\
&\leq &\left( \frac{\left\vert f^{\prime \prime }(a)\right\vert ^{\frac{a+b}{%
2}}}{\left\vert f^{\prime \prime }\left( \frac{a+b}{2}\right) \right\vert
^{a}}\right) ^{\frac{2}{b-a}}\left( \frac{\kappa _{1}^{a}-\kappa _{1}^{\frac{%
a+b}{2}}+\left( \frac{b-a}{4}\right) \left( \kappa _{1}^{a}+\kappa _{1}^{%
\frac{a+b}{2}}\right) \log \kappa _{1}}{\left( b-a\right) \log ^{3}\kappa
_{1}}\right) \\
&&+\left( \frac{\left\vert f^{\prime \prime }\left( \frac{a+b}{2}\right)
\right\vert ^{b}}{\left\vert f^{\prime \prime }(b)\right\vert ^{\frac{a+b}{2}%
}}\right) ^{\frac{2}{b-a}}\left( \frac{\tau _{1}^{\frac{a+b}{2}}-\tau
_{1}^{b}+\left( \frac{b-a}{4}\right) \left( \tau _{1}^{b}+\tau _{1}^{\frac{%
a+b}{2}}\right) \log \tau _{1}}{\left( b-a\right) \log ^{3}\tau _{1}}\right)
\end{eqnarray*}%
where 
\begin{eqnarray*}
\kappa _{1} &=&\left( \frac{\left\vert f^{\prime \prime }\left( \frac{a+b}{2}%
\right) \right\vert }{\left\vert f^{\prime \prime }(a)\right\vert }\right) ^{%
\frac{2}{b-a}} \\
\tau _{1} &=&\left( \frac{\left\vert f^{\prime \prime }(b)\right\vert }{%
\left\vert f^{\prime \prime }\left( \frac{a+b}{2}\right) \right\vert }%
\right) ^{\frac{2}{b-a}}.
\end{eqnarray*}
\end{corollary}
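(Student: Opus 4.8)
The plan is to invoke Theorem \ref{teo 3.1} directly with the specialization $\alpha=\beta=\frac{1}{2}$ and $x=\frac{a+b}{2}$, and then reduce the resulting two-sided inequality to the claimed form by elementary simplification. The first thing I would record is how the basic quantities collapse under this choice: since $\alpha+\beta=1$ the convex weights $\frac{\alpha}{\alpha+\beta}$ and $\frac{\beta}{\alpha+\beta}$ both equal $\frac{1}{2}$, while the midpoint choice gives the symmetric lengths $x-a=b-x=\frac{b-a}{2}$. I would also note at the outset that with $x=\frac{a+b}{2}$ the exponents $\frac{1}{x-a}$ and $\frac{1}{b-x}$ appearing in the definitions of $\kappa$ and $\tau$ both become $\frac{2}{b-a}$, so that $\kappa$ specializes to $\kappa_1$ and $\tau$ to $\tau_1$ exactly as defined in the statement.

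Next I would simplify the left-hand side. Under the substitution the term $\frac{\alpha f(a)+\beta f(b)}{\alpha+\beta}$ becomes $\frac{f(a)+f(b)}{2}$, and the prefactors $\frac{2}{\alpha+\beta}\cdot\frac{\alpha}{x-a}$ and $\frac{2}{\alpha+\beta}\cdot\frac{\beta}{b-x}$ both reduce to $\frac{2}{b-a}$. Since the two integrals share this common coefficient, they merge into $\frac{2}{b-a}\int_a^b f(t)\,dt$, so the whole expression equals $\left| f\!\left(\frac{a+b}{2}\right)+\frac{f(a)+f(b)}{2}-\frac{2}{b-a}\int_a^b f(t)\,dt\right|$, which is exactly twice the quantity on the left of the corollary.

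For the right-hand side I would carry out the same substitution in each of the two summands. The coefficients $\frac{\alpha}{(\alpha+\beta)(x-a)}$ and $\frac{\beta}{(\alpha+\beta)(b-x)}$ both become $\frac{1}{b-a}$, the logarithmic prefactors turn into the expressions in $\kappa_1$ and $\tau_1$ displayed in the corollary, and every occurrence of $x-a$ or $b-x$ inside the brackets is replaced by $\frac{b-a}{2}$. The one point demanding care is the sign: in the first summand the factor $-(a-x)$ must be rewritten using $a-x=-\frac{b-a}{2}$, so $-(a-x)\log\kappa=+\frac{b-a}{2}\log\kappa_1$, which is what produces the $+\left(\frac{b-a}{4}\right)$ coefficient after a common factor of $2$ is pulled out of the numerator. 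Extracting that factor of $2$ from each summand shows the entire right-hand side equals twice the right-hand side of the corollary.

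Finally, since both sides have acquired exactly the same factor of $2$, dividing the inequality of Theorem \ref{teo 3.1} through by $2$ yields the claimed Bullen-type bound. I expect the only real obstacle to be bookkeeping rather than anything conceptual: one must track the sign flip coming from $a-x<0$ and verify that the factor of $2$ generated on the left (from combining the two integrals against the coefficient $2$) matches the factor of $2$ extracted on the right (from the numerators $2(\kappa^a-\kappa^x)$ and $2\tau^x-2\tau^b$). Once these two factors are seen to coincide, the corollary follows immediately.
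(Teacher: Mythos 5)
Your proposal is correct and follows exactly the route the paper intends: substitute $\alpha=\beta=\tfrac12$, $x=\tfrac{a+b}{2}$ into Theorem \ref{teo 3.1}, observe that both sides acquire a common factor of $2$ (from merging the two integrals on the left and from the sign flip $-(a-x)=\tfrac{b-a}{2}$ plus the extracted factor $2$ in the numerators on the right), and divide through. The bookkeeping you describe checks out, so nothing further is needed.
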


\begin{theorem}
\label{teo 3.2} Let $f:[a,b]\rightarrow 
\mathbb{R}
$ be an absolutely continuous mapping such that $f^{\prime \prime }\in
L[a,b].$ If $\left\vert f^{\prime \prime }\right\vert ^{q}$ is $\log -$
convex function on $[a,b]$ and $q>1$ with $\frac{1}{p}+\frac{1}{q}=1,$ then%
\begin{eqnarray*}
&&\left\vert f(x)+\frac{\alpha f(a)+\beta f(b)}{\alpha +\beta }-\frac{2}{%
\alpha +\beta }\left[ \frac{\alpha }{x-a}\int_{a}^{x}f(t)dt+\frac{\beta }{b-x%
}\int_{x}^{b}f(t)dt\right] \right\vert \\
&\leq &\frac{\alpha }{\alpha +\beta }\left( x-a\right) \left( \beta \left(
p+1,p+1\right) \right) ^{\frac{1}{p}}\left( \frac{\left\vert f^{\prime
\prime }(a)\right\vert ^{x}}{\left\vert f^{\prime \prime }(x)\right\vert ^{a}%
}\right) ^{\frac{1}{x-a}}\left( \frac{\kappa ^{\frac{qx}{x-a}}-\kappa ^{%
\frac{qa}{x-a}}}{\log \kappa ^{\frac{q}{x-a}}}\right) ^{\frac{1}{q}} \\
&&+\frac{\beta }{\alpha +\beta }\left( b-x\right) \left( \beta \left(
p+1,p+1\right) \right) ^{\frac{1}{p}}\left( \frac{\left\vert f^{\prime
\prime }(x)\right\vert ^{b}}{\left\vert f^{\prime \prime }(b)\right\vert ^{x}%
}\right) ^{\frac{1}{b-x}}\left( \frac{\tau ^{\frac{qb}{b-x}}-\tau ^{\frac{qx%
}{b-x}}}{\log \tau ^{\frac{q}{b-x}}}\right) ^{\frac{1}{q}}.
\end{eqnarray*}%
where $\beta \left( x,y\right) =\int_{0}^{1}t^{x-1}\left( 1-t\right)
^{y-1}dt,$ $x,y>0$ is the Euler Beta function and $\kappa \neq 1,\tau \neq 1$%
, $\alpha ,\beta \in 
\mathbb{R}
$ nonnegative and not both zero.
\end{theorem}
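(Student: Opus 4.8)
The plan is to follow exactly the template of Theorem \ref{teo 2.2}, replacing the $s$-convexity estimate for $\left\vert f^{\prime \prime}\right\vert^{q}$ by the $\log$-convexity estimate. Starting from the identity in Lemma \ref{lem 2.1}, I would take absolute values, use the triangle inequality and the explicit form of the kernel to bound the left-hand side by
\[
\frac{\alpha }{\left( \alpha +\beta \right) \left( x-a\right) }
\int_{a}^{x}\left( t-a\right) \left( x-t\right) \left\vert f^{\prime \prime}(t)\right\vert \,dt
+\frac{\beta }{\left( \alpha +\beta \right) \left( b-x\right) }
\int_{x}^{b}\left( b-t\right) \left( t-x\right) \left\vert f^{\prime \prime}(t)\right\vert \,dt,
\]
and then apply H\"{o}lder's inequality on each of the two integrals with exponents $p$ and $q$, separating the kernel weight $(t-a)(x-t)$ (resp. $(b-t)(t-x)$) into the $L^{p}$ factor and $\left\vert f^{\prime \prime}\right\vert$ into the $L^{q}$ factor.

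For the $L^{p}$ factors I would evaluate $\int_{a}^{x}\left[ (t-a)(x-t)\right] ^{p}\,dt$ by the substitution $t=a+(x-a)u$, which turns it into $(x-a)^{2p+1}\int_{0}^{1}u^{p}(1-u)^{p}\,du=(x-a)^{2p+1}\beta(p+1,p+1)$, and similarly on $[x,b]$. This is the routine step and produces the Euler Beta factor $\left( \beta(p+1,p+1)\right) ^{1/p}$ together with the appropriate power of $(x-a)$ (resp. $(b-x)$) after dividing by the $\tfrac{1}{x-a}$ coming from the kernel prefactor.

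The substantive step is the $L^{q}$ factor. Since the argument $\frac{t-a}{x-a}x+\frac{x-t}{x-a}a$ simplifies to $t$, the $\log$-convexity of $\left\vert f^{\prime \prime}\right\vert^{q}$ gives, on $[a,x]$,
\[
\left\vert f^{\prime \prime}(t)\right\vert^{q}\leq \left\vert f^{\prime \prime}(x)\right\vert^{q\frac{t-a}{x-a}}\left\vert f^{\prime \prime}(a)\right\vert^{q\frac{x-t}{x-a}}
=\left( \frac{\left\vert f^{\prime \prime}(a)\right\vert^{x}}{\left\vert f^{\prime \prime}(x)\right\vert^{a}}\right) ^{\frac{q}{x-a}}\kappa^{qt},
\]
where I have pulled out the $t$-independent constant and used $\kappa=\left( \left\vert f^{\prime \prime}(x)\right\vert/\left\vert f^{\prime \prime}(a)\right\vert\right) ^{1/(x-a)}$. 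Integrating the purely exponential remainder gives $\int_{a}^{x}\kappa^{qt}\,dt=\frac{\kappa^{qx}-\kappa^{qa}}{q\log\kappa}$, which is exactly where the hypothesis $\kappa\neq 1$ is needed so that $\log\kappa\neq 0$ and the geometric integral is finite and nondegenerate; the $[x,b]$ piece is handled identically with $\tau$ in place of $\kappa$ and the constant $\left( \left\vert f^{\prime \prime}(x)\right\vert^{b}/\left\vert f^{\prime \prime}(b)\right\vert^{x}\right) ^{1/(b-x)}$.

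Finally I would take $q$-th roots of the $L^{q}$ factors, recombine them with the Beta-function $L^{p}$ factors and the kernel prefactors $\frac{\alpha}{\alpha+\beta}$ and $\frac{\beta}{\alpha+\beta}$, and collect the powers of $(x-a)$ and $(b-x)$ to reach the claimed bound. The main obstacle I anticipate is purely computational bookkeeping: correctly tracking the exponents of $(x-a)$ and $(b-x)$ that are distributed between the H\"{o}lder factors and the kernel normalization, and writing the exponential integral $\int_{a}^{x}\kappa^{qt}\,dt$ in the compact form involving $\log\kappa^{q/(x-a)}$ as stated, since this is where sign and exponent errors are easiest to make.
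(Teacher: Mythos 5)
Your proposal follows essentially the same route as the paper's own proof: start from Lemma \ref{lem 2.1}, apply H\"{o}lder's inequality with the full kernel weight in the $L^{p}$ factor to produce $\left( \beta \left( p+1,p+1\right) \right) ^{1/p}$, and use $\log$-convexity of $\left\vert f^{\prime \prime }\right\vert ^{q}$ to reduce each $L^{q}$ factor to a constant times the explicit exponential integral $\int \kappa ^{qt}\,dt$ (resp.\ $\int \tau ^{qt}\,dt$), which is exactly where $\kappa \neq 1,\tau \neq 1$ enter. Your anticipated bookkeeping worry is legitimate --- the powers of $(x-a)$ and $(b-x)$ and the precise form of the logarithm in the stated bound do not line up perfectly with what this computation literally yields --- but that discrepancy is already present between the paper's statement and its own proof, and your method is the same as theirs.
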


\begin{proof}
From Lemma \ref{lem 2.1}, by using $\log -$convexity of $\left\vert
f^{\prime \prime }\right\vert ^{q}$ and by applying H\"{o}lder inequality$,$
we get%
\begin{eqnarray*}
&&\left\vert f(x)+\frac{\alpha f(a)+\beta f(b)}{\alpha +\beta }-\frac{2}{%
\alpha +\beta }\left[ \frac{\alpha }{x-a}\int_{a}^{x}f(t)dt+\frac{\beta }{b-x%
}\int_{x}^{b}f(t)dt\right] \right\vert \\
&\leq &\left( \int_{a}^{x}\left( \frac{\alpha }{\alpha +\beta }\frac{\left(
t-a\right) \left( x-t\right) }{x-a}\right) ^{p}dt\right) ^{\frac{1}{p}%
}\left( \int_{a}^{x}\left\vert f^{\prime \prime }\left( \frac{t-a}{x-a}x+%
\frac{x-t}{x-a}a\right) \right\vert ^{q}dt\right) ^{\frac{1}{q}} \\
&&+\left( \int_{x}^{b}\left( \frac{\beta }{\alpha +\beta }\frac{\left(
b-t\right) \left( x-t\right) }{b-x}\right) ^{p}dt\right) ^{\frac{1}{p}%
}\left( \int_{x}^{b}\left\vert f^{\prime \prime }\left( \frac{t-x}{b-x}b+%
\frac{b-t}{b-x}x\right) \right\vert ^{q}dt\right) ^{\frac{1}{q}} \\
&\leq &\frac{\alpha }{\alpha +\beta }\left( x-a\right) \left( \int_{a}^{x}%
\frac{\left( t-a\right) ^{p}\left( x-t\right) ^{p}}{\left( x-a\right)
^{p}\left( x-a\right) ^{p}}dt\right) ^{\frac{1}{p}}\left(
\int_{a}^{x}\left\vert f^{\prime \prime }\left( \frac{t-a}{x-a}x+\frac{x-t}{%
x-a}a\right) \right\vert ^{q}dt\right) ^{\frac{1}{q}} \\
&&+\frac{\beta }{\alpha +\beta }\left( b-x\right) \left( \int_{x}^{b}\frac{%
\left( b-t\right) ^{p}\left( x-t\right) ^{p}}{\left( b-x\right) ^{p}\left(
b-x\right) ^{p}}dt\right) ^{\frac{1}{p}}\left( \int_{x}^{b}\left\vert
f^{\prime \prime }\left( \frac{t-x}{b-x}b+\frac{b-t}{b-x}x\right)
\right\vert ^{q}dt\right) ^{\frac{1}{q}} \\
&\leq &\frac{\alpha }{\alpha +\beta }\left( x-a\right) \left( \beta \left(
p+1,p+1\right) \right) ^{\frac{1}{p}}\left[ \left( \frac{\left\vert
f^{\prime \prime }(a)\right\vert ^{x}}{\left\vert f^{\prime \prime
}(x)\right\vert ^{a}}\right) ^{\frac{q}{x-a}}\int_{a}^{x}\left( \frac{%
\left\vert f^{\prime \prime }(x)\right\vert ^{\frac{q}{x-a}}}{\left\vert
f^{\prime \prime }(a)\right\vert ^{\frac{q}{x-a}}}\right) ^{t}dt\right] ^{%
\frac{1}{q}} \\
&&+\frac{\beta }{\alpha +\beta }\left( b-x\right) \left( \beta \left(
p+1,p+1\right) \right) ^{\frac{1}{p}}\left[ \left( \frac{\left\vert
f^{\prime \prime }(x)\right\vert ^{b}}{\left\vert f^{\prime \prime
}(b)\right\vert ^{x}}\right) ^{\frac{q}{b-x}}\int_{x}^{b}\left( \frac{%
\left\vert f^{\prime \prime }(b)\right\vert ^{\frac{q}{b-x}}}{\left\vert
f^{\prime \prime }(x)\right\vert ^{\frac{q}{b-x}}}\right) ^{t}dt\right] ^{%
\frac{1}{q}}.
\end{eqnarray*}%
By computing the above integrals, we get the desired result.
\end{proof}

\begin{theorem}
\label{teo 3.3} Let $f:[a,b]\rightarrow 
\mathbb{R}
$ be an absolutely continuous mapping such that $f^{\prime \prime }\in
L[a,b].$ If $\left\vert f^{\prime \prime }\right\vert ^{q}$ is $\log -$%
convex function on $[a,b]$ and $q\geq 1,$ then 
\begin{eqnarray*}
&&\left\vert f(x)+\frac{\alpha f(a)+\beta f(b)}{\alpha +\beta }-\frac{2}{%
\alpha +\beta }\left[ \frac{\alpha }{x-a}\int_{a}^{x}f(t)dt+\frac{\beta }{b-x%
}\int_{x}^{b}f(t)dt\right] \right\vert \\
&\leq &\frac{\alpha \left( x-a\right) ^{2-\frac{3}{q}}}{6^{1-\frac{1}{q}%
}\left( \alpha +\beta \right) }\left( \frac{\left\vert f^{\prime \prime
}(a)\right\vert ^{x}}{\left\vert f^{\prime \prime }(x)\right\vert ^{a}}%
\right) ^{\frac{1}{x-a}}\left( \frac{2\left( \kappa ^{qa}-\kappa
^{qx}\right) -q(a-x)\left( \kappa ^{qa}+\kappa ^{qx}\right) \log \kappa }{%
\log ^{3}\kappa ^{q}}\right) ^{\frac{1}{q}} \\
&&+\frac{\beta \left( b-x\right) ^{2-\frac{3}{q}}}{6^{1-\frac{1}{q}}\left(
\alpha +\beta \right) }\left( \frac{\left\vert f^{\prime \prime
}(x)\right\vert ^{b}}{\left\vert f^{\prime \prime }(b)\right\vert ^{x}}%
\right) ^{\frac{1}{b-x}}\left( \frac{2\tau ^{qx}-2\tau ^{qb}+q(b-x)\left(
\tau ^{qb}+\tau ^{qx}\right) \log \tau }{\log ^{3}\tau ^{q}}\right) ^{\frac{1%
}{q}}
\end{eqnarray*}%
holds where $\kappa ^{q}\neq 1,\tau ^{q}\neq 1$, $\alpha ,\beta \in 
\mathbb{R}
$ nonnegative and not both zero.
\end{theorem}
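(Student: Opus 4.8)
The plan is to follow the same template as the proof of Theorem \ref{teo 2.4}, simply replacing the $s$-convexity estimate by the $\log$-convexity one. Starting from the identity in Lemma \ref{lem 2.1}, I would pass to absolute values, bound $\left\vert\int_a^b K(x,t)f''(t)\,dt\right\vert\leq\int_a^b|K(x,t)|\,|f''(t)|\,dt$, and split the right-hand side at $t=x$ into the two pieces $\frac{\alpha}{(\alpha+\beta)(x-a)}\int_a^x(t-a)(x-t)|f''(t)|\,dt$ and $\frac{\beta}{(\alpha+\beta)(b-x)}\int_x^b(b-t)(t-x)|f''(t)|\,dt$. To each piece I would apply the power-mean integral inequality with exponents $1-\frac{1}{q}$ and $\frac{1}{q}$, taking $(t-a)(x-t)$ (respectively $(b-t)(t-x)$) as the nonnegative weight; this peels off a factor $\left(\int_a^x(t-a)(x-t)\,dt\right)^{1-1/q}=\left(\frac{(x-a)^3}{6}\right)^{1-1/q}$ and leaves a weighted integral of $|f''(t)|^q$.

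Next I would insert the $\log$-convexity bound. Writing $t=\frac{t-a}{x-a}x+\frac{x-t}{x-a}a$ on $[a,x]$, $\log$-convexity of $|f''|^q$ gives $|f''(t)|^q\leq|f''(x)|^{q\frac{t-a}{x-a}}|f''(a)|^{q\frac{x-t}{x-a}}=\left(\frac{|f''(a)|^x}{|f''(x)|^a}\right)^{\frac{q}{x-a}}\kappa^{qt}$, where $\kappa$ is the quantity defined just before Theorem \ref{teo 3.1}. The constant prefactor pulls out of the integral and, after taking the $q$th root, reproduces exactly the factors $\left(\frac{|f''(a)|^x}{|f''(x)|^a}\right)^{1/(x-a)}$ and $\frac{1}{6^{1-1/q}}$ appearing in the statement. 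An identical computation on $[x,b]$, using $t=\frac{t-x}{b-x}b+\frac{b-t}{b-x}x$ and the quantity $\tau$, produces the analogous factor with $\kappa$ replaced by $\tau$ and $(x-a)$ by $(b-x)$. The whole problem is thereby reduced to evaluating the two weighted exponential integrals $\int_a^x(t-a)(x-t)\kappa^{qt}\,dt$ and $\int_x^b(b-t)(t-x)\tau^{qt}\,dt$.

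The main work is this last evaluation. Setting $c=q\log\kappa$ so that $\kappa^{qt}=e^{ct}$, I would substitute $u=t-a$ and integrate $\int_0^{x-a}u(x-a-u)e^{c(u+a)}\,du$ by parts twice, using the standard primitives of $ue^{cu}$ and $u^2e^{cu}$; collecting terms and using $e^{c(x-a)}e^{ca}=e^{cx}$ yields $\int_a^x(t-a)(x-t)\kappa^{qt}\,dt=\frac{2(\kappa^{qa}-\kappa^{qx})-q(a-x)(\kappa^{qa}+\kappa^{qx})\log\kappa}{\log^3\kappa^q}$, which is precisely the bracketed factor (before the $\frac{1}{q}$ power) in the first line of the conclusion. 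Here the hypothesis $\kappa^q\neq1$ guarantees $c\neq0$, so the denominator $\log^3\kappa^q$ is nonzero. The second integral is handled by the symmetric substitution $v=t-x$ and produces the companion expression in $\tau$. Raising both to the $\frac{1}{q}$ power and assembling the exponents $(x-a)^{2-3/q}$ and $(b-x)^{2-3/q}$, which arise from combining the leading $\frac{1}{x-a}$ with $(x-a)^{3(1-1/q)}$ so that $-1+3(1-\frac{1}{q})=2-\frac{3}{q}$, completes the proof. The only genuinely delicate point is bookkeeping of these powers of $(x-a)$ and $(b-x)$ through the power-mean split; the exponential integration itself is routine once the primitives are in hand.
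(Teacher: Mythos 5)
Your proposal is correct and follows essentially the same route as the paper's own proof: the power-mean split with weight $(t-a)(x-t)$ (resp.\ $(b-t)(t-x)$), the $\log$-convexity bound $|f''(t)|^{q}\leq\bigl(|f''(a)|^{x}/|f''(x)|^{a}\bigr)^{q/(x-a)}\kappa^{qt}$, and the explicit evaluation of the weighted exponential integrals, with the same exponent bookkeeping giving $(x-a)^{2-3/q}$. The only difference is that you spell out the integration by parts that the paper leaves implicit, and your closed form $\int_a^x(t-a)(x-t)\kappa^{qt}\,dt=\frac{2(\kappa^{qa}-\kappa^{qx})-q(a-x)(\kappa^{qa}+\kappa^{qx})\log\kappa}{\log^{3}\kappa^{q}}$ checks out.
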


\begin{proof}
From Lemma \ref{lem 2.1}, by using the well-known power-mean integral
inequality and $\log -$convexity of $\left\vert f^{\prime \prime
}\right\vert ^{q},$ we have%
\begin{eqnarray*}
&&\left\vert f(x)+\frac{\alpha f(a)+\beta f(b)}{\alpha +\beta }-\frac{2}{%
\alpha +\beta }\left[ \frac{\alpha }{x-a}\int_{a}^{x}f(t)dt+\frac{\beta }{b-x%
}\int_{x}^{b}f(t)dt\right] \right\vert \\
&\leq &\frac{\alpha }{\left( \alpha +\beta \right) \left( x-a\right) }\left(
\int_{a}^{x}\left( t-a\right) \left( x-t\right) dt\right) ^{1-\frac{1}{q}%
}\left( \int_{a}^{x}\left( t-a\right) \left( x-t\right) \left( \left\vert
f^{\prime \prime }(x)\right\vert ^{q\frac{t-a}{x-a}}\left\vert f^{\prime
\prime }(a)\right\vert ^{q\frac{x-t}{x-a}}\right) dt\right) ^{\frac{1}{q}} \\
&&+\frac{\beta }{\left( \alpha +\beta \right) \left( b-x\right) }\left(
\int_{x}^{b}\left( b-t\right) \left( t-x\right) dt\right) ^{1-\frac{1}{q}%
}\left( \int_{x}^{b}\left( t-b\right) \left( t-x\right) \left( \left\vert
f^{\prime \prime }(b)\right\vert ^{q\frac{t-x}{b-x}}\left\vert f^{\prime
\prime }(x)\right\vert ^{q\frac{b-t}{b-x}}\right) dt\right) ^{\frac{1}{q}} \\
&=&\frac{\alpha }{\left( \alpha +\beta \right) \left( x-a\right) }\left( 
\frac{\left( x-a\right) ^{3}}{6}\right) ^{1-\frac{1}{q}}\left( \frac{%
\left\vert f^{\prime \prime }(a)\right\vert ^{x}}{\left\vert f^{\prime
\prime }(x)\right\vert ^{a}}\right) ^{\frac{1}{x-a}}\left( \frac{2\left(
\kappa ^{qa}-\kappa ^{qx}\right) -q(a-x)\left( \kappa ^{qa}+\kappa
^{qx}\right) \log \kappa }{\log ^{3}\kappa ^{q}}\right) ^{\frac{1}{q}} \\
&&+\frac{\beta }{\left( \alpha +\beta \right) \left( b-x\right) }\left( 
\frac{\left( b-x\right) ^{3}}{6}\right) ^{1-\frac{1}{q}}\left( \frac{%
\left\vert f^{\prime \prime }(x)\right\vert ^{b}}{\left\vert f^{\prime
\prime }(b)\right\vert ^{x}}\right) ^{\frac{1}{b-x}}\left( \frac{2\tau
^{qx}-2\tau ^{qb}+q(b-x)\left( \tau ^{qb}+\tau ^{qx}\right) \log \tau }{\log
^{3}\tau ^{q}}\right) ^{\frac{1}{q}}.
\end{eqnarray*}%
Which completes the proof.
\end{proof}

\begin{remark}
\label{rem 3.1} In Theorem \ref{teo 3.3}, if we choose $q=1$ Theorem \ref%
{teo 3.3} reduces to Theorem \ref{teo 3.1}.
\end{remark}

\begin{corollary}
For the particular selections of the parameters $\alpha ,\beta $ and the
variable $x,$ one can obtain several new inequalities for $\log -$convex
functions, we omit the details.
\end{corollary}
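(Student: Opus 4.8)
The plan is to specialize Theorems \ref{teo 3.1}, \ref{teo 3.2} and \ref{teo 3.3} by inserting concrete admissible values of the pair $(\alpha ,\beta )$ and of the interior point $x$, and then to simplify the resulting bound. In each of these theorems the parameters $\alpha ,\beta $ enter only through the weights $\frac{\alpha }{\alpha +\beta }$ and $\frac{\beta }{\alpha +\beta }$ and through the common left-hand factor $\frac{2}{\alpha +\beta }$, so every admissible choice collapses these quantities to explicit constants without disturbing the structure of the right-hand side, while the point $x$ enters only through the two building blocks $\kappa $ and $\tau $.

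First I would record the two symmetric weighting choices already illustrated for Theorem \ref{teo 3.1} in Corollaries \ref{co 3.1} and \ref{co 3.2}, namely $\alpha =\beta =1$ (which sends both ratios to $\frac{1}{2}$) and $\alpha =\beta =\frac{1}{2}$ together with $x=\frac{a+b}{2}$ (which in addition forces $x-a=b-x=\frac{b-a}{2}$ and lets one divide both sides by $2$). Carrying these same two substitutions through Theorems \ref{teo 3.2} and \ref{teo 3.3} produces their H\"{o}lder-type and power-mean-type companions: the first choice yields the symmetric left-hand side $\left\vert f(x)+\frac{f(a)+f(b)}{2}-\left[ \frac{1}{x-a}\int_{a}^{x}f+\frac{1}{b-x}\int_{x}^{b}f\right] \right\vert$, and the midpoint choice yields a Bullen-type bound in which $\kappa ,\tau $ are replaced by the midpoint quantities $\kappa _{1},\tau _{1}$ defined after Corollary \ref{co 3.2}.

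Next I would treat the one-sided degeneracies $\alpha >0,\beta =0$ and $\alpha =0,\beta >0$, which are admissible since the parameters need only be nonnegative and not both zero. In each case one of the two summands on the right vanishes and a single half-interval term survives, giving Ostrowski-type estimates on $[a,x]$ or on $[x,b]$; here $x$ may be chosen freely in $(a,b)$, each value producing a distinct inequality through the dependence of $\kappa $ (respectively $\tau $) on $x$.

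The main obstacle is not any deep argument but the careful handling of the excluded values: every bound in Theorems \ref{teo 3.1}--\ref{teo 3.3} carries a factor such as $\log ^{3}\kappa $ or $\log ^{3}\tau $ in a denominator, so each specialization must retain the standing hypotheses $\kappa \neq 1$ and $\tau \neq 1$ (equivalently $\left\vert f^{\prime \prime }(x)\right\vert \neq \left\vert f^{\prime \prime }(a)\right\vert $ and $\left\vert f^{\prime \prime }(b)\right\vert \neq \left\vert f^{\prime \prime }(x)\right\vert $). When one of these equalities does hold, the corresponding $\kappa ^{t}$ (respectively $\tau ^{t}$) reduces to a constant and the integral $\int_{a}^{x}(t-a)(x-t)\,dt=\frac{(x-a)^{3}}{6}$ must be used directly, recovering the convex-function constant in place of the logarithmic expression. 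Once these cases are separated, each stated specialization follows by the same elementary simplification already carried out in the proofs of Corollaries \ref{co 3.1} and \ref{co 3.2}, so the details may legitimately be omitted.
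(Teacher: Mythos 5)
Your proposal is correct and follows exactly the route the paper intends: the paper offers no proof beyond ``we omit the details,'' and the intended argument is precisely the direct substitution of admissible choices of $\alpha ,\beta $ and $x$ into Theorems \ref{teo 3.1}--\ref{teo 3.3}, on the pattern already displayed in Corollaries \ref{co 3.1} and \ref{co 3.2}. Your additional care with the one-sided choices $\alpha =0$ or $\beta =0$ and with the excluded values $\kappa =1$, $\tau =1$ (where $\int_{a}^{x}\left( t-a\right) \left( x-t\right) dt=\frac{\left( x-a\right) ^{3}}{6}$ replaces the logarithmic expression) is a sound refinement that the paper leaves implicit.
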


\section{APPLICATIONS FOR NUMERICAL INTEGRATION}

Suppose that $d=\left\{ a=x_{0}<x_{1}<...<x_{n}=b\right\} $ is a partition
of the interval $\left[ a,b\right] ,$ $h_{i}=x_{i+1}-x_{i},$ for $%
i=0,1,2,...,n-1$ and consider the averaged midpoint-trapezoid quadrature
formula%
\begin{equation*}
\int_{a}^{b}f\left( x\right) dx=A_{MT}\left( d,f\right) +R_{MT}\left(
d,f\right) ,
\end{equation*}%
where 
\begin{equation*}
A_{MT}\left( \pi ,f\right) =\frac{1}{4}\sum_{i=0}^{n-1}h_{i}\left[ f\left(
x_{i}\right) +2f\left( \frac{x_{i}+x_{i+1}}{2}\right) +f\left(
x_{i+1}\right) \right]
\end{equation*}%
Here, the term $R_{MT}\left( d,f\right) $ denotes the associated
approximation error. (See \cite{19})

\begin{proposition}
Let $f:[a,b]\rightarrow 
\mathbb{R}
$ be an absolutely continuous mapping such that $f^{\prime \prime }\in
L[a,b].$ If $\left\vert f^{\prime \prime }\right\vert $ is $\log -$ convex
function on $[a,b]$ and $\kappa _{1}\neq 1,\tau _{1}\neq 1,$ then for the
partition $d,$ following inequality holds%
\begin{eqnarray*}
&&\left\vert R_{MT}\left( d,f\right) \right\vert \\
&\leq &\left( \frac{\left\vert f^{\prime \prime }(x_{i})\right\vert ^{\frac{%
x_{i}+x_{i+1}}{2}}}{\left\vert f^{\prime \prime }\left( \frac{x_{i}+x_{i+1}}{%
2}\right) \right\vert ^{x_{i}}}\right) ^{\frac{2}{h_{i}}}\left( \frac{\kappa
_{i}^{x_{i}}-\kappa _{i}^{\frac{x_{i}+x_{i+1}}{2}}+\left( \frac{h_{i}}{4}%
\right) \left( \kappa _{i}^{x_{i}}+\kappa _{i}^{\frac{x_{i}+x_{i+1}}{2}%
}\right) \log \kappa _{i}}{h_{i}\log ^{3}\kappa _{i}}\right) \\
&&+\left( \frac{\left\vert f^{\prime \prime }\left( \frac{x_{i}+x_{i+1}}{2}%
\right) \right\vert ^{x_{i+1}}}{\left\vert f^{\prime \prime
}(x_{i+1})\right\vert ^{\frac{x_{i}+x_{i+1}}{2}}}\right) ^{\frac{2}{h_{i}}%
}\left( \frac{\tau _{i}^{\frac{x_{i}+x_{i+1}}{2}}-\tau _{i}^{x_{i+1}}+\left( 
\frac{h_{i}}{4}\right) \left( \tau _{i}^{x_{i+1}}+\tau _{i}^{\frac{%
x_{i}+x_{i+1}}{2}}\right) \log \tau _{i}}{h_{i}\log ^{3}\tau _{i}}\right) .
\end{eqnarray*}%
where $\kappa _{i}\neq 1,\tau _{i}\neq 1$ and defined as%
\begin{eqnarray*}
\kappa _{i} &=&\left( \frac{\left\vert f^{\prime \prime }\left( \frac{%
x_{i}+x_{i+1}}{2}\right) \right\vert }{\left\vert f^{\prime \prime
}(x_{i})\right\vert }\right) ^{\frac{2}{h_{i}}} \\
\tau _{i} &=&\left( \frac{\left\vert f^{\prime \prime }(x_{i+1})\right\vert 
}{\left\vert f^{\prime \prime }\left( \frac{x_{i}+x_{i+1}}{2}\right)
\right\vert }\right) ^{\frac{2}{h_{i}}}.
\end{eqnarray*}
\end{proposition}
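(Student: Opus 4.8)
The plan is to obtain the proposition as a direct specialization of Corollary \ref{co 3.2}, which already establishes the Bullen-type bound on each subinterval. The key observation is that the averaged midpoint-trapezoid rule $A_{MT}(d,f)$ is built from exactly the quantity that appears on the left-hand side of Corollary \ref{co 3.2}: on the single interval $[a,b]$ with $\alpha=\beta=\tfrac12$ and $x=\tfrac{a+b}{2}$, that corollary controls
\begin{equation*}
\left\vert \frac{1}{2}\left[ f\left( \frac{a+b}{2}\right) +\frac{f(a)+f(b)}{2}\right] -\frac{1}{b-a}\int_{a}^{b}f(t)\,dt\right\vert .
\end{equation*}
Multiplying through by $(b-a)$ recovers precisely $\tfrac12 h\bigl[f(a)+2f(\tfrac{a+b}{2})+f(b)\bigr]\cdot\tfrac12 - \int_a^b f$, i.e.\ the per-interval version of $A_{MT}$ minus the exact integral. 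So the first step is to apply Corollary \ref{co 3.2} on each subinterval $[x_i,x_{i+1}]$ in place of $[a,b]$, replacing $a\mapsto x_i$, $b\mapsto x_{i+1}$, $b-a\mapsto h_i$, and $\tfrac{a+b}{2}\mapsto \tfrac{x_i+x_{i+1}}{2}$, so that the constants $\kappa_1,\tau_1$ become the $\kappa_i,\tau_i$ defined in the statement.

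Next I would multiply the per-interval inequality from Corollary \ref{co 3.2} by $h_i$ to clear the $\tfrac{1}{h_i}$ factor that sits inside its left-hand side, turning the average integral $\tfrac{1}{h_i}\int_{x_i}^{x_{i+1}}$ into $\int_{x_i}^{x_{i+1}}$, and likewise turning $\tfrac{1}{2}\bigl[f(\tfrac{x_i+x_{i+1}}{2})+\tfrac{f(x_i)+f(x_{i+1})}{2}\bigr]$ into its $h_i$-weighted form. Summing over $i=0,1,\dots,n-1$ then assembles the quadrature sum $A_{MT}(d,f)=\tfrac14\sum_i h_i[f(x_i)+2f(\tfrac{x_i+x_{i+1}}{2})+f(x_{i+1})]$ on the midpoint-trapezoid side and $\int_a^b f$ on the integral side, so that the telescoped left-hand side becomes $\bigl\vert A_{MT}(d,f)-\int_a^b f\bigr\vert = \vert R_{MT}(d,f)\vert$. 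Applying the triangle inequality across the sum bounds $\vert R_{MT}(d,f)\vert$ by the sum over $i$ of the per-interval right-hand sides, which are exactly the two $\kappa_i$- and $\tau_i$-terms displayed in the proposition.

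I expect the main subtlety to be bookkeeping rather than genuine difficulty: one must verify that the $h_i$ factor exactly cancels the $\tfrac{1}{h_i}$ and $\tfrac{1}{b-a}$ denominators in Corollary \ref{co 3.2} so that the averaging constant $\tfrac14$ in $A_{MT}$ emerges correctly, and that the exponent $\tfrac{2}{b-a}$ in the prefactors $\bigl(\tfrac{|f''(a)|^{(a+b)/2}}{|f''((a+b)/2)|^a}\bigr)^{2/(b-a)}$ transcribes cleanly to $\tfrac{2}{h_i}$. The displayed bound in the proposition appears to present only the generic $i$-th summand rather than the full sum $\sum_{i=0}^{n-1}$, so for strict correctness I would either insert the summation sign explicitly or note that each term is the contribution of the $i$-th subinterval; this is the one place where the statement as written should be read with the implicit $\sum_i$ restored. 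Beyond that, the proof requires no new estimates: it is entirely a matter of rescaling Corollary \ref{co 3.2} to $[x_i,x_{i+1}]$ and summing, so I would simply write ``Applying Corollary \ref{co 3.2} on each subinterval $[x_i,x_{i+1}]$ and summing over $i$ yields the desired inequality.''
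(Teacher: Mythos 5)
Your proposal is correct and is essentially the paper's own proof: apply Corollary \ref{co 3.2} on each subinterval $[x_i,x_{i+1}]$, rescale by $h_i$, and sum with the triangle inequality. Your remark that the displayed right-hand side should carry an explicit $\sum_{i=0}^{n-1}$ (and, one should add, the factor $h_i$ produced by clearing the $\tfrac{1}{h_i}$ from the averaged integral) is a fair correction to the statement as printed.
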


\begin{proof}
By applying Corollary \ref{co 3.2} to the subintervals $\left[ x_{i},x_{i+1}%
\right] $ of $d,$ ($i=0,1,...,n-1)$ and by summation. We obtain the desired
result.
\end{proof}

\begin{proposition}
Let $f:[a,b]\rightarrow 
\mathbb{R}
$ be an absolutely continuous mapping such that $f^{\prime \prime }\in
L[a,b].$ If $\left\vert f^{\prime \prime }\right\vert $ is $s-$ convex in
the second sense on $[a,b]$ for some fixed $s\in (0,1],$ then for partition $%
d$ of $[a,b]$ the following inequality holds:%
\begin{eqnarray*}
&&\left\vert R_{MT}\left( d,f\right) \right\vert \\
&\leq &\frac{h_{i}^{2}}{8\left( s+2\right) \left( s+3\right) }\left[
\left\vert f^{\prime \prime }\left( \frac{x_{i}+x_{i+1}}{2}\right)
\right\vert +\frac{\left\vert f^{\prime \prime }(x_{i})\right\vert
+\left\vert f^{\prime \prime }(x_{i+1})\right\vert }{2}\right] .
\end{eqnarray*}
\end{proposition}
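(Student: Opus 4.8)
The plan is to reduce the global quadrature error to a sum of local Bullen-type errors, each of which is controlled by Corollary \ref{co 2.3}. First I would write the error by additivity of the integral over the partition together with the definition of $A_{MT}$, namely
\begin{equation*}
R_{MT}(d,f)=\int_a^b f(t)\,dt-A_{MT}(d,f)=\sum_{i=0}^{n-1}\left[\int_{x_i}^{x_{i+1}}f(t)\,dt-\frac{h_i}{4}\left(f(x_i)+2f\left(\frac{x_i+x_{i+1}}{2}\right)+f(x_{i+1})\right)\right].
\end{equation*}

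Next I would apply Corollary \ref{co 2.3} on each subinterval $[x_i,x_{i+1}]$, that is, with $a,b,x$ replaced by $x_i,\,x_{i+1},\,\frac{x_i+x_{i+1}}{2}$ respectively, so that the relevant interval length is $h_i=x_{i+1}-x_i$. Since $\left\vert f''\right\vert$ is $s$-convex on all of $[a,b]$, it is in particular $s$-convex on each $[x_i,x_{i+1}]$, so the hypotheses are met and the corollary yields
\begin{equation*}
\left|\frac{1}{2}\left[f\left(\frac{x_i+x_{i+1}}{2}\right)+\frac{f(x_i)+f(x_{i+1})}{2}\right]-\frac{1}{h_i}\int_{x_i}^{x_{i+1}}f(t)\,dt\right|\leq\frac{h_i^2}{8(s+2)(s+3)}\left[\left|f''\left(\frac{x_i+x_{i+1}}{2}\right)\right|+\frac{|f''(x_i)|+|f''(x_{i+1})|}{2}\right].
\end{equation*}

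Then I would multiply this local estimate by $h_i$ and use the algebraic identity $\frac{h_i}{2}\left[f\left(\frac{x_i+x_{i+1}}{2}\right)+\frac{f(x_i)+f(x_{i+1})}{2}\right]=\frac{h_i}{4}\left(f(x_i)+2f\left(\frac{x_i+x_{i+1}}{2}\right)+f(x_{i+1})\right)$, which is exactly the $i$-th summand of $A_{MT}$. Thus the left-hand side becomes the modulus of the $i$-th bracketed term in the expression for $R_{MT}$ above, and the triangle inequality applied to the sum over $i$ gives
\begin{equation*}
|R_{MT}(d,f)|\leq\sum_{i=0}^{n-1}\frac{h_i^3}{8(s+2)(s+3)}\left[\left|f''\left(\frac{x_i+x_{i+1}}{2}\right)\right|+\frac{|f''(x_i)|+|f''(x_{i+1})|}{2}\right].
\end{equation*}

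I do not expect a genuine analytic obstacle here: the argument is pure bookkeeping built on Corollary \ref{co 2.3}. The only points requiring care are the scaling step, where multiplying the normalized corollary by $h_i$ raises the power of the subinterval length in each summand (so that the clean per-term bound carries a factor $h_i^3$ rather than $h_i^2$), and the verification that the per-subinterval Bullen average coincides with the local midpoint-trapezoid weight; these two observations are precisely what allow the local estimates to assemble into a bound on the global error after summing over the partition.
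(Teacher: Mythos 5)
Your argument is exactly the paper's (one-sentence) proof --- apply Corollary \ref{co 2.3} on each subinterval $[x_i,x_{i+1}]$, rescale by $h_i$ to convert the Bullen average into the local midpoint--trapezoid term, and sum with the triangle inequality --- just carried out with the bookkeeping made explicit. Your careful execution also exposes that the bound as printed in the proposition is mis-stated: the rescaling and summation yield $\sum_{i=0}^{n-1}\frac{h_i^{3}}{8(s+2)(s+3)}\bigl[\,|f''(\tfrac{x_i+x_{i+1}}{2})|+\tfrac{|f''(x_i)|+|f''(x_{i+1})|}{2}\,\bigr]$, i.e.\ a sum over $i$ with $h_i^{3}$, not a single term with a dangling index $i$ and $h_i^{2}$.
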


\begin{proof}
By applying Corollary \ref{co 2.3} to the\ subintervals $\left[ x_{i},x_{i+1}%
\right] $ of $d,$ ($i=0,1,...,n-1)$ and by summation. we get the result.
\end{proof}


\begin{thebibliography}{99}
\bibitem{0} M. Z. Sarikaya and E. Set, On new Ostrowski type integral
inequalities, Thai Journal of Mathematics, Vol. 12 (2014), No: 1, 145-154.

\bibitem{1} A.M. Fink, Hadamard's inequality for $\log -$concave functions,
Math. Comput. Modelling 32(5--6) (2000), 625--629.

\bibitem{2} B.G. Pachpatte, A note on integral inequalities involving two $%
\log -$convex functions, Mathematical Inequalities \& Applications, 7(4)
(2004), 511-515.

\bibitem{3} B.G. Pachpatte, A note on Hadamard type integral inequalities
involving several $\log -$convex functions, Tamkang Journal of Mathematics,
36(1) (2005), 43-47.

\bibitem{4} C.E.M. Pearce and J. Pe\v{c}ari\'{c}. Inequalities for
differentiable mappings with application to special means and quadrature
formulae, Appl. Math. Lett., 13(2) 2000, 51-55.

\bibitem{6} C.P. Niculescu, The Hermite--Hadamard inequality for $\log -$%
convex functions, Nonlinear Analysis, 75 (2012), 662--669.

\bibitem{10} G-S. Yang, K-L. Tseng and H-t. Wang, A note on integral
inequalities of Hadamard type for $\log -$convex and $\log -$concave
functions, Taiwanese Journal of Mathematics, 16(2) (2012), 479-496.

\bibitem{17} S.S. Dragomir \& C. Pearce, Selected topics on Hermite-Hadamard
inequalities and applications, Victoria University: RGMIA Monographs, (17)
2000. [http://ajmaa.org/RGMIA/monographs/hermite hadamard.html].

\bibitem{18} S.S. Dragomir, Some Jensen's Type Inequalities for $\log -$%
Convex Functions of Selfadjoint Operators in Hilbert Spaces, Bulletin of the
Malaysian Mathematical Sciences Society, 34(3) (2011), 445-454.

\bibitem{19} S.S. Dragomir, P. Cerone and J. Roumeliotis, A new
generalization of Ostrowski's integral inequality for mappings whose
derivatives are bounded and applications in numerical integration and for
special means, Applied Mathematics Letters, 13 (2000), 19-25.

\bibitem{100} A. Ostrowski,\textit{\ }\"{U}ber die Absolutabweichung einer
differentierbaren Funktion von ihren Integralmittelwert, Comment. Math.
Helv., 10, 226-227, (1938).

\bibitem{hudzik} H. Hudzik, L. Maligranda, Some remarks on $s-$convex
functions, Aequationes Math. 48 (1994) 100-111.

\bibitem{drag} S.S. Dragomir, S. Fitzpatrick, The Hadamard's inequality for $%
s-$convex functions in the second sense. Demonstratio Math. 32 (4) (1999)
687-696.

\bibitem{kirmaci} U.S. K\i rmac\i , M.K. Bakula, M.E. \"{O}zdemir and J. Pe%
\v{c}ari\'{c}, Hadamard-type inequalities for $s-$convex functions, Appl.
Math. Comp., 193 (2007), 26-35.

\bibitem{a1} S. Hussain, M.I. Bhatti and M. Iqbal, Hadamard-type
inequalities for $s-$convex functions, Punjab University, Journal of
Mathematics, 41 (2009) 51-60.

\bibitem{avci} M. Avci, H. Kavurmaci and M.E. \"{O}zdemir, New inequalities
of Hermite--Hadamard type via $s-$convex functions in the second sense with
applications, Appl. Math. and Comput., 217(2011) 5171-5176.

\bibitem{zeki} M.Z. Sarikaya, E. Set and M.E. \"{O}zdemir, On new
inequalities of Simpson's type for $s-$convex functions, Comp. and Math.
with Appl., 60 (2010) 2191-2199.
\end{thebibliography}
\end{document}